\acrodef{PPP}[PPP]{Poisson Point Process}
\acrodef{NPPP}[NPPP]{Non-homogeneous PPP}
\acrodef{CDF}[CDF]{Cumulative Distribution Function}
\acrodef{PDF}[PDF]{Probability Distribution Function}
\acrodef{PMF}[PMF]{Probability Mass Function}
\acrodef{PCF}[PCF]{Pair Correlation Function}
\acrodef{RV}[RV]{Random Variable}
\acrodef{i.i.d.}[i.i.d.]{independent and identically distributed}
\acrodef{w.r.t.}[w.r.t.]{with respect to}
\acrodef{V2V}[V2V]{Vehicle-to-Vehicle}
\acrodef{1D}[1D]{one-dimensional}
\acrodef{2d}[2d]{two-dimensional}
\acrodef{VANET}[VANET]{Vehicular ad hoc network}
\tiny\color{gray},
\newtheorem{theorem}{Theorem}
\newtheorem{corollary}{Corollary}[theorem]
\newtheorem{definition}{Definition}[section]
\numberwithin{figure}{section}
\numberwithin{equation}{section}
\def\E{\mathop{\hbox{\rm I\kern-0.20em E}}\nolimits}
\begin{document}

\title{
  \huge
 Connectivity of 1d random geometric graphs 
}

\author[1]{Alexander P. Kartun-Giles\thanks{alexander.giles@ntu.edu.sg}}
\author[2]{Kostas Koufos \thanks{konstantinos.koufos@gmail.com}}
\author[1]{Nicolas Privault\thanks{nprivault@ntu.edu.sg}}
\affil[1]{
   \small
   Division of Mathematical Sciences, School of Physics and Mathematical Sciences, Nanyang Technological University, Singapore, 21 Nanyang Link, Singapore 637371}
\affil[1]{
   \small
   School of Mathematics, University of Bristol, Fry Building, Woodland Road, Bristol, BS8 1UG, United Kingdom}

\maketitle

\baselineskip0.6cm

\vspace{-0.6cm}

\begin{abstract}
A \textit{1d random geometric graph} (1d RGG) is built by joining a random sample of $n$ points from an interval of the real line with probability $p$. We count the number of $k$-hop paths between two vertices of the graph in the case where the space is the 1d interval $[0,1]$.  We show how the $k$-hop path count between two vertices at Euclidean distance $|x-y|$ is in bijection with the volume enclosed by a uniformly random $d$-dimensional lattice path joining the corners of a $(k-1)$-dimensional hyperrectangular lattice. We are able to provide the probability generating function and distribution of this $k$-hop path count as a sum over lattice paths, incorporating the idea of restricted integer partitions with limited number of parts. We therefore demonstrate and describe an important link between spatial random graphs, and lattice path combinatorics, where the $d$-dimensional lattice paths correspond to spatial permutations of the geometric points on the line.

  
\end{abstract} 

\noindent
{\em Keywords}: Random geometric graphs,
unit disk model,
connectivity,
$k$-hop paths,
Ferrers diagrams,
lattice paths,
Gaussian binomial coefficients. 

\noindent
    {\em Mathematics Subject Classification (2020): 05C80, 
05C40, 
11B65. 
    }

\baselineskip0.7cm

\parskip-0.1cm

\newpage

\tableofcontents

\section{Introduction}
Perhaps the simplest example of a random spatial network  \cite{barthelemy2018,kartungiles2019,georgiou2013,georgiou2015,georgiou2016,kartungiles2016,mulder2018,cunningham2017,fountoulakis2020} is the 1d random geometric graph (1d RGG) \cite{wilsher2020,kartungiles2020,penrosebook,drory1997,knight2017,grimmettbook,boguna2020}. 1d problems are widely studied in order to first understand a simpler case, such as the Ising and Heisenberg models of magnetism studied by Ising and Hans Bethe in 1924 and 1931 respectively, in the later case resulting in the famous Bethe ansatz, as well as more modern examples of e.g. 1d statistical mechanics of nucleosome positioning on genomic DNA, or 1d stochastic traffic flow models \cite{lieb1967,liebreview,tesoro2016,pesheva1997}.

The 1d case of the random geometric graph has appeared in three major places. Firstly, as random spatial models in the physics of complex systems, see for example the 1d soft random geometric graph \cite{wilsher2020} used in complex networks by Krioukov et al. in network geometry \cite{krioukov2016}. Secondly, in Poisson-Boolean continuum percolation \cite{drory1997,meester1996,shalitin1981,gori2017}, and thirdly, in vehicular communications \cite{foh2004,foh2005,han2007,wilsher2020,kartungiles2020,knight2017,shang2009,ng2011,zhang2012,zhang2014,mao2017,ajeer2011,gupta1999,mao2017,koufos2016,koufos2018,koufos2019,kartungiles20182,kartungiles20202}. For 1d spatial models similar to the 1d RGG, see the 1d exponential random geometric graph \cite{gupta2008}, random interval graphs where the connection is between overlapping intervals of random length \cite{godehardt1996,shang2009}, or various models of one-dimensional mathematical physics \cite{drory1997, lieb1967,liebreview}. For a historical introduction to the similar problem of covering a line by random overlapping intervals, see Domb \cite{domb1989}.

1d RGGs do not undergo the connectivity phase transition in the same, non-trivial way as their 2d counterparts \cite{lieb1967}.  This has the advantage of simplifying the underlying geometric probability. M. D. Penrose specifically points out that the connectivity transition in \cite[Theorem 6.1]{krivelevich2016}, ``is different (when $d=1$) because 1-space is less connected''. The connectivity transition in 1d has in fact been been solved by A. Drory in the hard case, via a statistical physics approach using the Potts model \cite{drory1997}. Study of connectivity in the soft case can be found in Wilsher et al. \cite{wilsher2020}, where hard and soft connectivity in the 1d case are compared, and connectivity in the soft case is studied. The soft case \cite{knight2017,kartungiles2020,ng2011,zhang2012,zhang2014,mao2017} remains unsolved.

The value of understanding the mathematics of the 1d case is well motivated based on some recent problems in spatial complex networks \cite{kartungiles2015,kartungiles2019,kartungiles2018,kartungiles20183,kartungiles2016,kartungiles20162,knight2017,privault2019}, particularly betweenness centrality \cite{kartungiles2015}, which involves counting the number of paths between two nodes in a complex network \cite{kartungiles2018,kartungiles2019}. Therefore, in this article, we focus on counting the integer number $\sigma_{k}$ of $k$-hop paths which run between two fixed vertices of the 1d RGG. The problem links random geometric graphs to the problem of counting multidimensional lattice paths on the $d$-dimensional integer lattice.

The main point of interest of this article is a connection between paths in random geometric graphs, and the volume beneath a multidimensional lattice path. This also links the problem to the theory of integer partitions, since the volume under the path is a restricted integer partition with a limited number of parts. We put the restriction Eq.~\eqref{e:range1} on the connection range $r_0$ of the vertices in the random geometric graph, to avoid an important complication known as ``overlapping lenses'', see Section \ref{sec:overlap}. This may at a later stage be overcome by consisdering a more sophisticated lattice path counting problem involving downward steps rather than just up, right, and so on, working toward the terminal point at the corner of the lattice,  but the link to lattice path enumeration remains, which is the focus of the article. We leave relaxing Eq.~\eqref{e:range1} it to a later study.

Our results are expressions for the probability mass function (p.m.f.) and probability generating function (p.g.f) for the number $\sigma_{k}$ of $k$-hop paths between two vertices of the 1d RGG, in terms of the point process density $\lambda$, and the connection range $r_0$. The focus is on the role lattice paths, spatially random permutations, and random integer partitions play. A closely related article is Janson \cite[Section 3]{janson2012}, where the distribution of the area under a multidimensional lattice path is considered as a U-statistic, though this is not related to 1d RGGs.

This paper is organised as follows. In Section \ref{sec:summary} we summarise our results on 1d RGGs. In Section \ref{sec:preliminaries}, we introduce some preliminary ideas about lattice paths and Ferrers diagrams that appear throughout the paper, and define our notation. In Section \ref{sec:allres}, we provide proofs of an expression in terms of a sum over lattice paths for the p.m.f. of the number of $k$-hop paths between two nodes, and provide proof of a p.g.f for this quantity, as is typical in enumeration problems. We also include some Mathematica code for the problem in Appendix \ref{a:1}.



\section{Summary of results}\label{sec:summary}

 \begin{figure}
\centering
\includegraphics[scale=0.265]{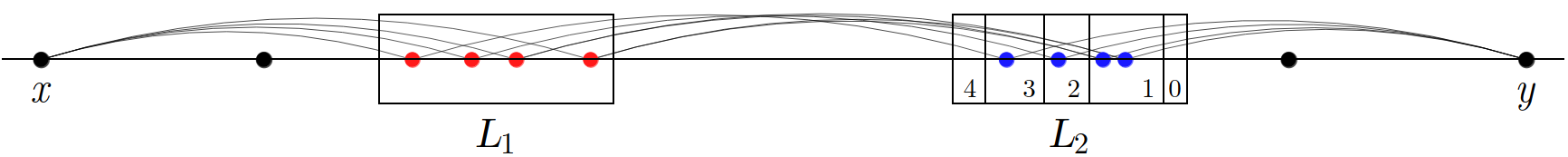}
\caption{The two lenses $L_1$ and $L_2$, and their nodes (red and blue respectively). The nodes in the left lens $L_1$ connect to the nodes in the right lens $L_2$ when they are within a range $r_0$. This is depicted with the red bars in $L_2$, which are bins which give path counts via each nodes. The three-hop path count in this case is $\sigma_{3}=1+1+2+3=7$.}\label{fig:3hops} \vspace{5mm}
\includegraphics[scale=0.265]{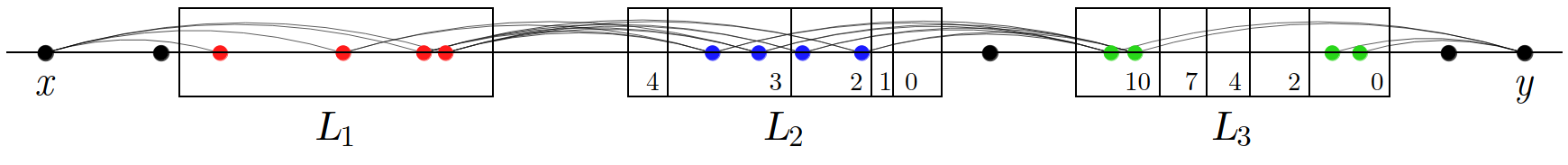}

\caption{The three lenses $L_1,L_2$ and $L_3$, and their nodes (red, blue and green, respectively). The nodes in the left lens $L_1$ connect to the nodes in the right lens $L_2$ when they are within a range $r_0$, and similarly for $L_2$ and $L_3$. This is depicted with the red bars in $L_2$ and the blue bars in $L_3$. The value of $\sigma_{4}$, in this case, is $\sigma_{4}=0+0+10+10=20$.}\label{fig:3hops2}
\includegraphics[scale=0.265]{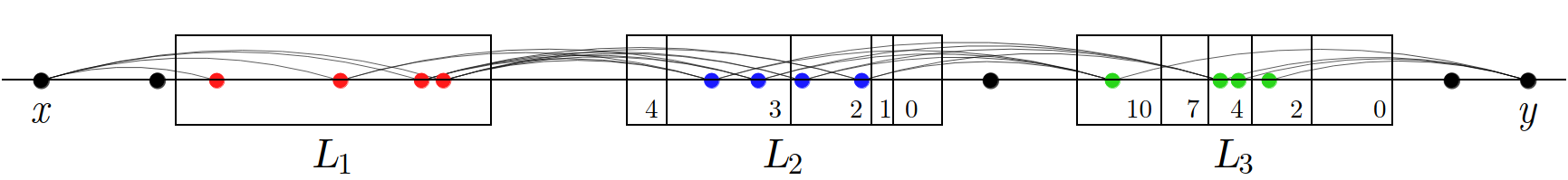}

\caption{In this case, is $\sigma_{4}=2+4+4+10=20$. The six lattice paths which give this exact partition of 20 are depicted in Fig. \ref{fig:main-22}. Since there are six, the partition degeneracy of $2+4+4+10 \vdash 20$ is $6$.}\label{fig:used}
\includegraphics[scale=0.265]{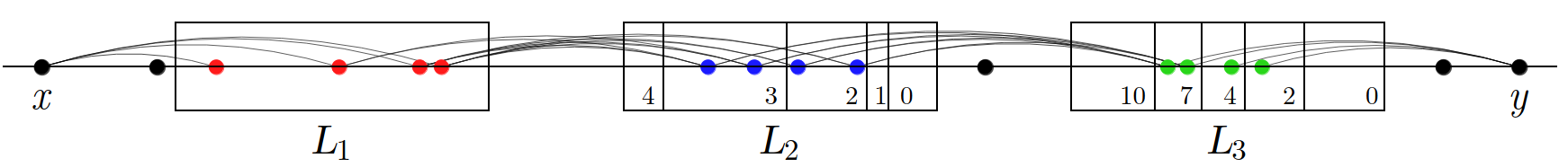}

\caption{In this case, is $\sigma_{4}=2+4+7+7=20$. The partition degeneracy in this case is 2. We add the partition degeneracy of all partitions to get the total number of lattice paths to which it corresponds.}
\label{fig:k4-1}
\end{figure}

\subsection{Introduction and notation}
Consider the 1d random geometric graph $G$ defined in Section \ref{sec:1drgg}. We are able to state the probability mass function of the number of $k$-hop paths in $G$ which run between the two vertices conditioned to exist at the endpoints of the domain $[0,1]$ as a sum over lattice paths, in the following way. Consider the $d$-dimensional hyperrectangle intersecting $\mathbb{Z}^{d}$, which is the hyperrectangular lattice
\begin{equation}\label{e:rec}
\mathcal{M}_{d}:=  \{0,\dots,m_1\} \times \dots \times \{0,\dots,m_{d}\}, 
\end{equation}
 Given two lattice points $A,\Omega \in \mathcal{M}_{d}$, a set of steps $\mathbb{S}$ (i.e. unit vector movements between adjacent lattice sites), and an integer $m>0$, we denote by $L_{m}(A \to \Omega ;\mathbb{S})$ the set of lattice paths from $A$ to $\Omega$ with $m$ steps in the set $\mathbb{S}$. Then with $e_i$ denoting the vector with a 1 in the $i^{\text{th}}$ position, and 0 elsewhere, we consider
\begin{equation}
\mathcal{P}([m_{k_{1}} \times \dots \times m_{k_{n}}]) := L_{m}(\mathbf{0} \to  (m_{k_{1}},\dots,m_{k_{n}}) ; \{e_i: 1 \leq i \leq n \} )
\end{equation}
to be the set of lattice paths between $\mathbf{0}$ and $(m_{k_{1}},\dots,m_{k_{n}})$ in the lattice $[m_{k_{1}} \times \dots \times m_{k_{n}}]$. Then, consider a lattice path $\Pi_{k-1} \in \mathcal{P}(\mathcal{M}_{k-1})$. We need six further straightforward ingredients derived from $\Pi_{k}$. Firstly, as shown in Fig.~\ref{fig:project}, the projections $\Pi_{ij}(\Pi_{k-1})$ of $\Pi_{k-1}$ onto the faces of $\mathcal{M}_{k-1}$, which are ${m_i+m_j \choose m_i}$-letter words composed of two types of letter. Secondly, the integer partition $\pi_{ij}\left(\Pi_{k-1}\right)$ corresponding to the Ferrers diagram under the projection $\Pi_{ij}(\Pi_{k-1})$, which is a sequence of $m_i$ integers each of size no larger than $m_j$, and whose $t^{\text{th}}$ part is $\pi_{ij}(t)$, where the context is clear that we are considering the $t^{\text{th}}$ part of the integer sequence $\pi_{ij}\left(\Pi_{k-1}\right)$. When the subscripts are omitted in the case $\pi(\Pi)$, this denotes the integer partition corresponding to the 2d lattice path $\Pi$, though the context will be clear.

Thirdly, since the hypervolume $V$ under the path may be interpreted as a restricted integer partition of $V$ using exactly $m_{k-1}$ parts from the set $S(\Pi_{k-2})$, we need the integer sequences of Eq.~\eqref{e:ss1}, which we detail there.  We write
\begin{equation}
 S(\Pi_{k-1}) = \left(S_{i}(\Pi_{k-1})\right)_{i=0,\dots,m_{k-2}}
\end{equation}
to distinguishing the individual parts. Fourth, we need the multiplicity of each part of the partition $\pi_{ij}$, denoted $\pi_{ij}^{\star}$, so with the partition $1+1+3+3 = 8$ represented as a sequence is $\pi_{ij}=(1,1,3,3)$, and then $\pi_{ij}^{\star}=(0,2,0,2,0)$, so two 1's and two 3's. Fifth, we need the notion of the complement of a Ferrers diagram. For $1\leq i < j \leq k-1$ we let $\pi_{j,i}$ denote
the complement of $\pi_{i,j}$. For example, the diagram of $\pi_{i,j}=(1,1,3,3)$ has complement $\pi_{j,i}=(0,2,2,4)$, and $\pi_{j,i}^{\star}=(1,0,2,0,1)$. Finally, for some lattice path $\Pi_{\gamma}$, another lattice path $\Pi$, and some integer partition $\pi(\Pi)$, we define the ``dot product'' of two equal length sequences
\begin{equation}
  S(\Pi_{\gamma}) \cdot \pi(\Pi) :=  S_0 (\Pi_{\gamma}) \pi (0) + \cdots +  S_{m_{\gamma}} (\Pi_{\gamma}) \pi (m_{\gamma}).
    \end{equation}
A simple example of the use of all the notation in this article is given in detail in Section \ref{sec:example}.

\subsection{Results}
For density $\lambda \geq 0$, number of hops $k \in \mathbb{N}^{+}$, a connection range $r_0$ satisfying
\begin{equation}\label{e:range1}
\frac{1}{k} < r_0 < \frac{1}{k-1}
\end{equation}
and with $|x-y|$ the Euclidean distance between two points $x,y \in [0,1]$, consider the 1d hard random geometric graph on the Poisson point process $\mathcal{P}_{\lambda} \subset [0,1]$, and two further points conditioned to exist at $0,1 \in [0,1]$, forming the vertex set $V := P_{\lambda} \cup \{0,1\}$, and with edge set $E := \{\{x,y\} \in V^{2}: |x-y|<r_0 \}$, denoted $G(V,E)$. Consider $m_1,\dots,m_{k-1} \sim \textrm{Poisson}(\lambda|L|)$, where $|L|$ is given by Eq.~\eqref{e:width}.  Then we have the following statements for the distribution of the number of $k$-hop paths $\sigma_{k}$ between the endpoints $0,1$.

Firstly concerning the trivial case when $k=1$, then $\sigma_{1}=\mathbf{1}_{\{r_{0}\geq1\}}$. When $k=2$, the Poisson number $m_1$ of vertices in $L_1$ implies $\sigma_{2} \sim \text{Poisson}(2r_0 - 1)$. Now consider the case $k \geq 3$ of the following theorem.
\begin{theorem}[Distribution of $\sigma_{k}$]\label{t:p1-2}
Assume that $k \geq 3$. The probability of observing $n$ $k$-hop paths connecting the two vertices at the boundary of $[0,1]$ in $G(V,E)$ is given by
 \begin{equation}\label{e:eq1}
  \mathbb{P} (\sigma_k =n ) = \frac{ 1}{{m_1 + \dots +  m_{k-1} \choose m_1,\ldots, m_{k-1} }}  \sum_{\Pi_{k-2} \in \mathcal{P}\left(\mathcal{M}_{k-2}\right)} \sum_{\Pi \in {\cal P}([m_{k-2}\times m_{k-1}])   \atop   S(\Pi_{k-2}) \cdot \pi^\star(\Pi)   = n }
  \prod_{r=0}^{m_{k-2}} {\pi^\star (r)+\sum_{l=1}^{k-3} \pi_{k-2,l}^\star (r) \choose \pi^\star (r)}, 
 \end{equation}
\end{theorem}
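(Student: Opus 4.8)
The plan is to translate $\sigma_k$ into a combinatorial count over words, and then to read off \eqref{e:eq1} by decomposing such a word into three independent pieces of lattice‑path data. First I would recall from the construction of the 1d RGG that every $k$-hop path from $0$ to $1$ meets each lens $L_i:=(1-(k-i)r_0,\,ir_0)$, $i=1,\dots,k-1$, in exactly one vertex (triangle inequality), that each $L_i$ has width $|L|=kr_0-1$ as in Eq.~\eqref{e:width}, and that \eqref{e:range1} makes the $L_i$ pairwise disjoint and forbids any edge between $L_i$ and $L_{i\pm2}$: this is the ``no overlapping lenses'' situation of Section~\ref{sec:overlap}. Vertices of $\mathcal{P}_\lambda$ outside $\bigcup_iL_i$ thus lie on no $k$-hop path and can be discarded, so by restriction/independence of the Poisson process the counts $m_i:=\#(\mathcal{P}_\lambda\cap L_i)$ are independent $\mathrm{Poisson}(\lambda|L|)$, and conditionally on $(m_1,\dots,m_{k-1})$ the points of $L_i$ are $m_i$ i.i.d.\ uniform, independent across $i$. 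A one‑line computation (consecutive lenses are translates of each other by $r_0$, and $|L|<r_0$) shows that two vertices in adjacent lenses are joined iff their rescaled local coordinates satisfy $u_i>u_{i+1}$, while the boundary edges $\{0,v_1\}$, $\{v_{k-1},1\}$ are always present; hence, conditionally on $(m_1,\dots,m_{k-1})$,
\begin{equation*}
\sigma_k=\#\bigl\{(j_1,\dots,j_{k-1}):\,1\le j_i\le m_i,\ U_{1,j_1}>U_{2,j_2}>\cdots>U_{k-1,j_{k-1}}\bigr\},
\end{equation*}
with $U_{i,j}$ the $M:=m_1+\cdots+m_{k-1}$ independent uniform local coordinates; this is the regime in which \eqref{e:eq1} is to be read as the conditional p.m.f.\ given $(m_1,\dots,m_{k-1})$.

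Since $\sigma_k$ depends on the $U_{i,j}$ only through their relative order and lens labels, I would condition further on this order: the result is a uniformly random word $\omega$ over $\{1,\dots,k-1\}$ with $m_i$ copies of the letter $i$ (there are $\binom{M}{m_1,\dots,m_{k-1}}$ of these, equally likely), with $\sigma_k=\sigma_k(\omega)$ now deterministic. This produces the prefactor in \eqref{e:eq1} and reduces the theorem to the combinatorial identity
\begin{equation*}
\#\{\omega:\sigma_k(\omega)=n\}=\sum_{\Pi_{k-2}\in\mathcal{P}(\mathcal{M}_{k-2})}\ \sum_{\substack{\Pi\in\mathcal{P}([m_{k-2}\times m_{k-1}])\\ S(\Pi_{k-2})\cdot\pi^\star(\Pi)=n}}\ \prod_{r=0}^{m_{k-2}}\binom{\pi^\star(r)+\sum_{l=1}^{k-3}\pi^\star_{k-2,l}(r)}{\pi^\star(r)}.
\end{equation*}

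The structural observation behind this identity is that, since the chain constraint $u_1>\cdots>u_{k-1}$ is transitive, $\sigma_k(\omega)$ depends on $\omega$ only through (i) the sub‑word $\omega^{<}$ on $\{1,\dots,k-2\}$, equivalently a lattice path $\Pi_{k-2}\in\mathcal{P}(\mathcal{M}_{k-2})$ via the standard word/monotone‑path bijection, and (ii) the relative order of the $(k-2)$- and $(k-1)$-letters, i.e.\ a $2$d path $\Pi\in\mathcal{P}([m_{k-2}\times m_{k-1}])$ with its partition $\pi(\Pi)$. The word $\omega$ carries exactly one further degree of freedom: within each of the $m_{k-2}+1$ gaps cut out by the $(k-2)$-letters of $\omega^{<}$, indexed $r=0,\dots,m_{k-2}$ by the number of $(k-2)$-letters above the gap, how the $(k-1)$-letters placed in that gap by $\Pi$ interleave with the $\{1,\dots,k-3\}$-letters already there. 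So $\omega\mapsto(\Pi_{k-2},\Pi)$ is onto its image with fibre exactly these interleavings, and since gap $r$ holds $\sum_{l=1}^{k-3}\pi^\star_{k-2,l}(r)$ letters from $\{1,\dots,k-3\}$ (by the face projections of $\Pi_{k-2}$) and receives $\pi^\star(r)$ letters $k-1$ (by $\Pi$), the fibre has size $\prod_{r=0}^{m_{k-2}}\binom{\pi^\star(r)+\sum_{l=1}^{k-3}\pi^\star_{k-2,l}(r)}{\pi^\star(r)}$. To evaluate $\sigma_k$ on a fixed pair $(\Pi_{k-2},\Pi)$ I would group chains according to their $(k-1)$-letter: such a letter in gap $r$ is joined to precisely the $r$ highest $(k-2)$-letters, hence extends the $S_r(\Pi_{k-2})$ partial chains $1<\cdots<(k-2)$ ending among those $r$ vertices (the quantity of Eq.~\eqref{e:ss1}), so $\sigma_k=\sum_{r=0}^{m_{k-2}}\pi^\star(r)\,S_r(\Pi_{k-2})=S(\Pi_{k-2})\cdot\pi^\star(\Pi)$; summing the fibre sizes over all $(\Pi_{k-2},\Pi)$ with dot product $n$ gives \eqref{e:eq1}.

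The geometric reduction and the two conditioning steps are routine; the real work, and the main obstacle, is the bookkeeping of the last paragraph: checking that (i)--(iii) are genuinely independent and jointly recover $\omega$, and matching ``number of $l$-letters in gap $r$'' and ``number of partial chains through the top‑$r$ vertices of $L_{k-2}$'' to the precise objects $\pi^\star_{k-2,l}(r)$ and $S_r(\Pi_{k-2})$ built from the face projections and Eq.~\eqref{e:ss1}. This is also the one place where \eqref{e:range1} enters essentially, since disjoint, non‑overlapping lenses are what make the chains nest so that only the internal order of lenses $1,\dots,k-2$ and the single $L_{k-2}$--$L_{k-1}$ interface contribute.
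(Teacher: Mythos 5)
Your proposal is correct and follows essentially the same route as the paper: lenses of equal width $kr_0-1$, reduction to a uniformly random word (equivalently a lattice path in $\mathcal{M}_{k-1}$) giving the multinomial prefactor, the identity $\sigma_k = S(\Pi_{k-2})\cdot\pi^\star(\Pi)$, and the product of binomial coefficients counting the words compatible with a fixed pair $(\Pi_{k-2},\Pi)$. In fact your fibre/interleaving argument supplies the justification for the ``partition degeneracy'' formula \eqref{e:prodbin1}, which the paper's proof states without derivation, so your write-up is if anything more complete than the original.
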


 Using the coefficient extraction operator $[u^{n}]$ of Eq. \eqref{e:ceo1}, which is standard notation for the coefficient of $u^{n}$ in a series it precedes, we have the following result.
\begin{theorem}[Probability generating function for $\sigma_{k}$] \label{t:p1-4}
  Let $k \geq 2$. Then the p.g.f. of the number of $k$-hop paths connecting  the two vertices at the boundary of $[0,1]$ in $G(V,E)$ is given by
\begin{equation}
  \label{e:final}
  \mathbb{E}\big[q^{\sigma_k}\big] = \frac{ 1}{{m_1 + \cdots +  m_{k-1} \choose m_1,\ldots, m_{k-1} }}
         [u^{m_{k-1}}] \sum_{\Pi \in \mathcal{P}\left(\mathcal{M}_{k-2}\right)}
         \prod_{t=0}^{m_{k-2}}\frac{1}{( 
      1 - u q^{S_{t}(\Pi)} )^{1+\sum_{l=1}^{k-3}\pi_{k-2,l}^\star (t)}} .
\end{equation}
\end{theorem}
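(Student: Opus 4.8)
The plan is to read off Theorem~\ref{t:p1-4} from the distributional identity \eqref{e:eq1} of Theorem~\ref{t:p1-2} by forming the generating function $\mathbb{E}[q^{\sigma_k}]=\sum_{n\ge 0}q^{n}\,\mathbb{P}(\sigma_k=n)$ and then recognising the resulting inner lattice-path sum as a negative-binomial series, to which the coefficient-extraction operator of \eqref{e:ceo1} applies by enforcing a composition constraint. I treat $k\ge 3$ first. Since $m_1,\dots,m_{k-1}$ are held fixed, $\sigma_k$ is bounded and all the sums involved are finite, so $\sum_n q^n$ may be moved past the double sum in \eqref{e:eq1}; once this is done the side condition $S(\Pi_{k-2})\cdot\pi^{\star}(\Pi)=n$ is no longer a constraint and merely records the exponent of $q$. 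This yields
\begin{equation}
\mathbb{E}\big[q^{\sigma_k}\big]
=\frac{1}{\binom{m_1+\cdots+m_{k-1}}{m_1,\ldots,m_{k-1}}}
\sum_{\Pi_{k-2}\in\mathcal{P}(\mathcal{M}_{k-2})}\ \sum_{\Pi\in\mathcal{P}([m_{k-2}\times m_{k-1}])}
q^{\,S(\Pi_{k-2})\cdot\pi^{\star}(\Pi)}\prod_{r=0}^{m_{k-2}}\binom{\pi^{\star}(r)+\sum_{l=1}^{k-3}\pi_{k-2,l}^{\star}(r)}{\pi^{\star}(r)} ,
\end{equation}
and it remains to evaluate the inner sum over $\Pi$ with the outer path $\Pi_{k-2}$ held fixed.

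The crucial observation is that, with $\Pi_{k-2}$ fixed, the quantities $a_t:=\sum_{l=1}^{k-3}\pi_{k-2,l}^{\star}(t)$ and $S_t:=S_t(\Pi_{k-2})$, for $t=0,\dots,m_{k-2}$, do not depend on $\Pi$, so the summand depends on $\Pi$ only through its multiplicity sequence $\pi^{\star}(\Pi)=(\pi^{\star}(0),\dots,\pi^{\star}(m_{k-2}))$. Using the conventions of Section~\ref{sec:summary}, I would note that $\Pi\mapsto\pi^{\star}(\Pi)$ is a bijection from $\mathcal{P}([m_{k-2}\times m_{k-1}])$ onto the set of weak compositions $(j_0,\dots,j_{m_{k-2}})$ with $j_t\ge 0$ and $\sum_t j_t=m_{k-1}$ (both sets having cardinality $\binom{m_{k-2}+m_{k-1}}{m_{k-2}}$), under which $S(\Pi_{k-2})\cdot\pi^{\star}(\Pi)=\sum_t S_t j_t$ and $\prod_r\binom{\pi^{\star}(r)+a_r}{\pi^{\star}(r)}=\prod_t\binom{j_t+a_t}{j_t}$, so the inner sum equals $\sum_{j_0+\cdots+j_{m_{k-2}}=m_{k-1}}\prod_{t=0}^{m_{k-2}}\binom{j_t+a_t}{j_t}q^{S_t j_t}$. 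Since each exponent $1+a_t$ is a positive integer, the generalised binomial theorem gives the formal power series identity
\begin{equation}
\prod_{t=0}^{m_{k-2}}\frac{1}{(1-uq^{S_t})^{1+a_t}}
=\sum_{j_0,\dots,j_{m_{k-2}}\ge 0}\ \prod_{t=0}^{m_{k-2}}\binom{j_t+a_t}{j_t}\,u^{j_t}q^{S_t j_t} ,
\end{equation}
and applying $[u^{m_{k-1}}]$ — which picks out exactly the terms with $\sum_t j_t=m_{k-1}$ — returns precisely the inner sum. Substituting this back and renaming the outer variable $\Pi_{k-2}$ as $\Pi$ produces \eqref{e:final}.

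It remains to settle the case $k=2$, for which \eqref{e:eq1} does not apply. Here $\mathcal{M}_0$ is a single point, $\mathcal{P}(\mathcal{M}_0)$ has one element with $S_0=1$ and an empty exponent sum, and the only geometric data are the $m_1$ points of the single overlap lens, each within $r_0$ of both endpoints and so contributing exactly one $2$-hop path; hence $\sigma_2=m_1$ (given $m_1$) and $\mathbb{E}[q^{\sigma_2}]=q^{m_1}$. On the right-hand side of \eqref{e:final} the multinomial coefficient is $\binom{m_1}{m_1}=1$ and $[u^{m_1}](1-uq)^{-1}=q^{m_1}$, so the formula holds in this case as well.

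The step I expect to be the only genuine obstacle is the bookkeeping in the second paragraph: one must check that the indexing of $\pi^{\star}$, of the complement multiplicities $\pi_{k-2,l}^{\star}$, and of $S(\Pi_{k-2})$ all runs over the same range $t=0,\dots,m_{k-2}$; that the exponents $a_t$ are truly independent of the inner summation variable $\Pi$, so that the generating function factors as a product over $t$ of one-variable series; and that the ``dot product'' and the product of binomial factors transport correctly across the path--composition bijection. Once the notation of Section~\ref{sec:summary} is pinned down these are routine verifications, and the surrounding manipulations are only operations on finite sums and on formal power series in the single variable $u$.
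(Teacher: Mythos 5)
Your proposal is correct and follows essentially the same route as the paper's proof: both arguments rest on the negative-binomial expansion $\frac{1}{(1-q)^{1+l}}=\sum_{n\ge 0}\binom{n+l}{n}q^{n}$ together with the identification of the inner lattice paths $\Pi\in\mathcal{P}([m_{k-2}\times m_{k-1}])$ with their multiplicity sequences, the only difference being one of direction — you sum $q^{n}$ against the p.m.f.\ of Theorem~\ref{t:p1-2} and repackage the result as a coefficient extraction, whereas the paper expands the product into a power series in $u$ and $q$ and matches its coefficients against Eq.~\eqref{e:eq1}. Your explicit check of the $k=2$ case is somewhat more careful than the paper's ``follows straightforwardly,'' though it relies on a convention for $S(\Pi_1)$ and $m_0$ that the paper does not spell out.
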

The coefficients of Eq. \ref{e:final} are corroborated via Monte Carlo simulation of the 1d RGG for the case $k=3,4$ in Figs. \ref{fig:k81} and \ref{fig:k82}.

\subsection{The three-hop case}

\begin{corollary}[The distribution in the case $k=3$]
  Theorem \ref{t:p1-2} implies that, since $\mathcal{P}(\mathcal{M}_{1})$ contains only one path, the probability of observing $n$ three-hop paths connecting the two vertices at the boundary of $[0,1]$ in $G(V,E)$ is given by
 \begin{equation}\label{e:3hopsprop}
  \mathbb{P} (\sigma_3 =n ) = \frac{ 1}{{m_1 + m_2\choose m_1}} \sum_{\Pi \in {\cal P}(\mathcal{M}_{2})\atop S(\Pi_{2}) \cdot \pi^\star(\Pi)  = n }1, 
 \end{equation}

\end{corollary}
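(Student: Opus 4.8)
\emph{Proof proposal.} The plan is to obtain the corollary by directly specialising Theorem~\ref{t:p1-2} to $k=3$ and checking that each ingredient of~\eqref{e:eq1} either collapses to a single term or trivialises. First I would set $k=3$, so that $k-2=1$ and $k-3=0$. The outer summation in~\eqref{e:eq1} then runs over $\Pi_{k-2}\in\mathcal{P}(\mathcal{M}_{1})$; since by~\eqref{e:rec} the lattice $\mathcal{M}_{1}=\{0,1,\dots,m_1\}$ is a one-dimensional segment of $\mathbb{Z}$, the set $\mathcal{P}(\mathcal{M}_1)=L_{m_1}(\mathbf{0}\to m_1;\{e_1\})$ contains exactly the unique monotone path joining its endpoints, so it is a singleton and the outer sum contributes precisely one term. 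This is the only structural simplification that has to be argued, and it is immediate from the definition of $\mathcal{P}(\cdot)$ given in Section~\ref{sec:summary}.

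Next I would handle the product $\prod_{r=0}^{m_{k-2}}\binom{\pi^\star(r)+\sum_{l=1}^{k-3}\pi_{k-2,l}^\star(r)}{\pi^\star(r)}$. With $k=3$ the index $l$ in the inner sum runs over the empty range $1\le l\le 0$, hence $\sum_{l=1}^{k-3}\pi_{k-2,l}^\star(r)=0$ for every $r$, and each factor reduces to $\binom{\pi^\star(r)}{\pi^\star(r)}=1$; the whole product therefore equals $1$. Simultaneously the multinomial normaliser becomes $\binom{m_1+m_2}{m_1,m_2}=\binom{m_1+m_2}{m_1}$, the index set $\mathcal{P}([m_{k-2}\times m_{k-1}])$ becomes $\mathcal{P}([m_1\times m_2])=\mathcal{P}(\mathcal{M}_2)$, and the summation constraint $S(\Pi_{k-2})\cdot\pi^\star(\Pi)=n$ becomes $S(\Pi_{1})\cdot\pi^\star(\Pi)=n$. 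Substituting all of this into~\eqref{e:eq1} yields exactly~\eqref{e:3hopsprop}, with the inner summand $1$ now counting the lattice paths meeting the volume constraint.

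The closest thing to an obstacle is purely one of bookkeeping the indices: the constraint written in~\eqref{e:3hopsprop} reads $S(\Pi_{2})\cdot\pi^\star(\Pi)=n$, so I would verify, using the definition of the sequences $S(\cdot)$ in~\eqref{e:ss1}, that for $k=3$ the sequence $S(\Pi_{k-2})=S(\Pi_1)$ attached to the unique path in $\mathcal{M}_1$ is indeed the sequence used to form the dot product against the partition $\pi^\star(\Pi)$ of $\Pi\in\mathcal{P}(\mathcal{M}_2)$, so that the stated constraint is the correct one. Once this identification is confirmed the corollary follows with no new estimates or combinatorial identities beyond Theorem~\ref{t:p1-2}; the argument is essentially a verification that the empty sum, the singleton path set, and the empty product all behave as expected.
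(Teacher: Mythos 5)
Your proposal is correct and follows essentially the same route as the paper, which likewise obtains \eqref{e:3hopsprop} by specialising Theorem~\ref{t:p1-2} to $k=3$: the outer sum collapses because $\mathcal{P}(\mathcal{M}_1)$ is a singleton, the empty sum over $1\le l\le k-3$ trivialises the product of binomial coefficients, and the multinomial normaliser reduces to $\binom{m_1+m_2}{m_1}$. Your closing remark about reconciling $S(\Pi_{k-2})=S(\Pi_1)$ with the constraint $S(\Pi_2)\cdot\pi^\star(\Pi)=n$ as written in \eqref{e:3hopsprop} is well taken --- the paper's indexing of the $S$-sequences in \eqref{e:ss1} is shifted by one relative to the theorem's statement, and the intended sequence is $(0,1,\dots,m_1)$, whose dot product with $\pi^\star(\Pi)$ is exactly the area under $\Pi$.
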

\begin{corollary}[The p.g.f. in the case $k=3$]
  Theorem \ref{t:p1-4} implies that the p.g.f. of the number of $3$-hop paths connecting  the two vertices at the boundary of $[0,1]$ in $G(V,E)$ is given by a normalised $q$-binomial coefficient of Eq.~\eqref{e:qbin},
\begin{equation}\label{e:threepgf}
 \mathbb{E}\big[q^{\sigma_3}\big] =  \frac{ 1}{{m_1 + m_2\choose m_1}}  [u^{m_{2}}] \prod_{t=0}^{m_{1}}\frac{1}{ 1 - u q^{t}  } = \frac{ 1}{{m_1 + m_2\choose m_1}} {m_1 + m_2 \choose m_1}_{q}.
\end{equation}
\end{corollary}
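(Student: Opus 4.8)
The plan is to specialize Theorem~\ref{t:p1-4} to $k=3$ and then recognize the resulting coefficient extraction as the classical generating-function identity for the Gaussian binomial coefficient of Eq.~\eqref{e:qbin}.

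First I would substitute $k=3$ directly into Eq.~\eqref{e:final}. The outer sum runs over $\Pi \in \mathcal{P}(\mathcal{M}_{k-2}) = \mathcal{P}(\mathcal{M}_1)$, and since $\mathcal{M}_1 = \{0,\dots,m_1\}$ is one-dimensional there is exactly one monotone lattice path from $\mathbf{0}$ to $(m_1)$, so the sum collapses to a single term. In the exponent $1+\sum_{l=1}^{k-3}\pi_{k-2,l}^\star(t)$ the sum is empty because $k-3=0$, so every factor in the product carries exponent $1$. Reading off the sequence $S(\Pi)$ for this unique path from its definition in Eq.~\eqref{e:ss1}, the volume-under-the-path interpretation forces the staircase $S(\Pi)=(0,1,\dots,m_1)$, hence $q^{S_t(\Pi)}=q^t$, and Eq.~\eqref{e:final} becomes
\[
  \mathbb{E}\big[q^{\sigma_3}\big]
  = \frac{1}{{m_1+m_2 \choose m_1}}\,[u^{m_2}]\prod_{t=0}^{m_1}\frac{1}{1-uq^t},
\]
which is the first claimed equality.

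Finally I would invoke the standard $q$-series identity
\[
  \prod_{t=0}^{m_1}\frac{1}{1-uq^t} \;=\; \sum_{n\ge 0}{n+m_1 \choose m_1}_{q}\,u^n,
\]
the generating function for Gaussian binomial coefficients (a form of the $q$-binomial theorem), and extract $[u^{m_2}]$ to obtain ${m_1+m_2 \choose m_1}_q$, giving the second equality.

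I do not anticipate a genuine obstacle here: the argument is just the $k=3$ degeneration of Theorem~\ref{t:p1-4} followed by a classical identity. The only step that is more than bookkeeping is confirming that the single path in $\mathcal{P}(\mathcal{M}_1)$ produces exactly the staircase sequence $(0,1,\dots,m_1)$ rather than, say, a reversed or constant sequence, which follows immediately from the Ferrers-diagram construction of Section~\ref{sec:preliminaries}. (Alternatively, one could reach the same conclusion from the $k=3$ form of the p.m.f.\ in Eq.~\eqref{e:3hopsprop} by noting that $\sum_{\Pi}\,q^{S(\Pi_2)\cdot\pi^\star(\Pi)}$ is precisely the $q$-enumeration of lattice paths in a $2$-box that defines ${m_1+m_2\choose m_1}_q$.)
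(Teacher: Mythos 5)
Your proposal is correct and follows essentially the same route as the paper: the paper also obtains the $k=3$ p.g.f.\ as the coefficient extraction $[u^{m_2}]\prod_{t=0}^{m_1}(1-uq^t)^{-1}$ normalised by ${m_1+m_2\choose m_1}$, identifying it with the Gaussian binomial via the restricted-partition generating function Eq.~\eqref{e:fmain}, which is the same classical identity you invoke. Your observation that $S(\Pi_2)=(0,1,\dots,m_1)$ is in fact immediate, since this is how $S(\Pi_2)$ is defined in the first line of Eq.~\eqref{e:ss1}.
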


\section{Preliminaries}\label{sec:preliminaries}

  
  
 The following preliminary sections introduce our notation, the 1d RGG, multidimensional lattice paths and their projections, and the associated restricted integer partitions.
 \subsection{1d random geometric graphs}\label{sec:1drgg}
 To define the homogeneous Poisson point process $\mathcal{P}_{\lambda} \subset [0,1]$, of intensity $\lambda$ times Lebesgue measure $||\cdot||$ on $[0,1]$, for $a,b \in [0,1]$, with $a \leq b$, and with $N(a,b]$ the count of points in $\mathcal{P}_{\lambda} \cap (a,b]$, then we have that
   \begin{equation}
     \mathbb{P}(N(a,b] = n) = \frac{\left(\lambda\left(b-a\right)\right)^{n}}{n!}e^{-\lambda(b-a)},
   \end{equation}
   and furthermore, that the counts of points in any pair of disjoint intervals of $[0,1]$ are independent.
   Then, for density $\lambda \geq 0$ points per unit length, connection range $r_0>0$, and with $|x-y|$ the Euclidean distance between two points $x,y \in [0,1]$, then with the Poisson point process $\mathcal{P}_{\lambda} \subset [0,1]$, and two further points conditioned to exist at $0,1 \in [0,1]$, forming the vertex set $V := P_{\lambda} \cup \{0,1\}$, and with edge set $E := \{\{x,y\} \in V^{2}: |x-y|<r_0 \}$, then the 1d hard random geometric graph is the graph $G(V,E)$. This model is used throughout this article.
\begin{figure}[t]
\centering \includegraphics[scale=0.38]{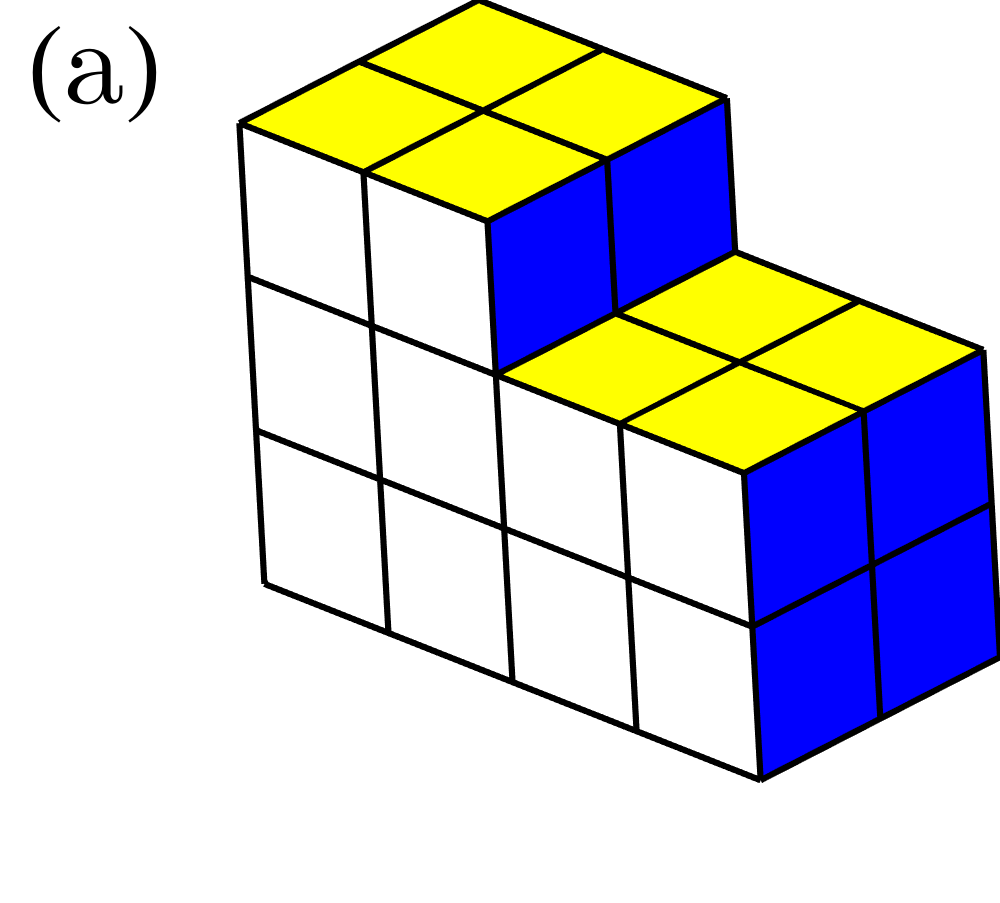} \hspace{2mm} \includegraphics[scale=0.38]{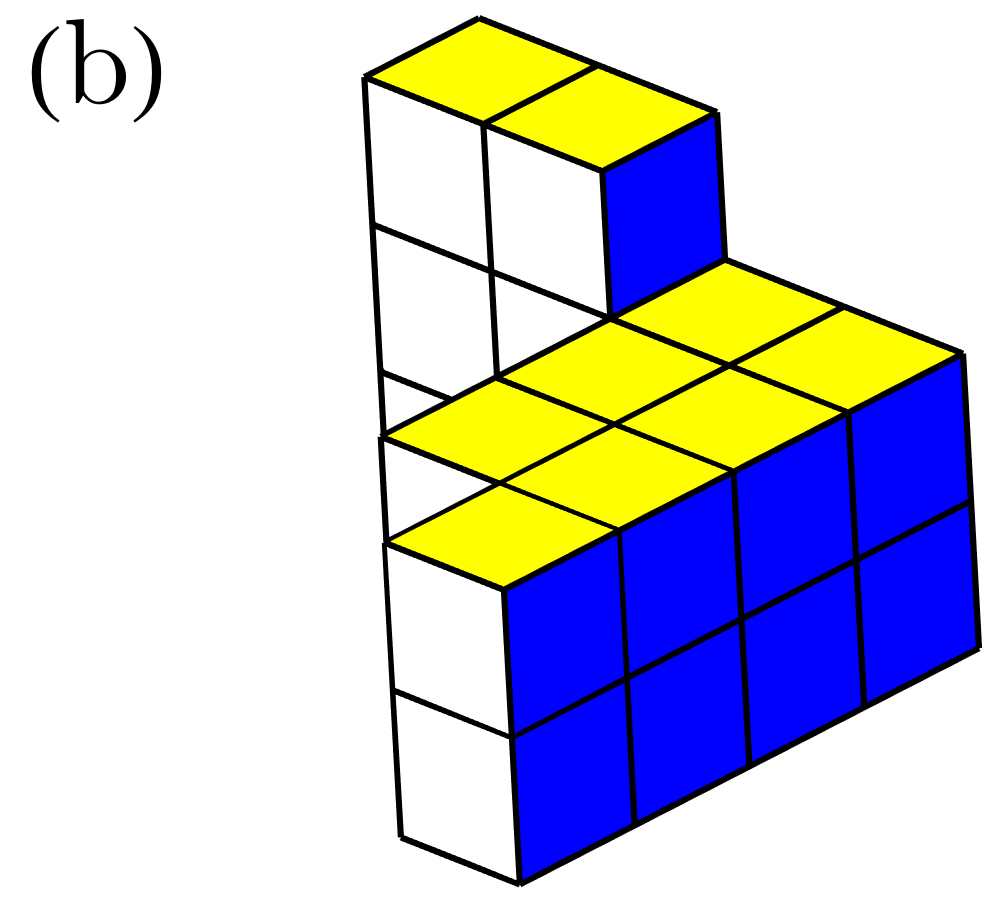} \hspace{2mm} \includegraphics[scale=0.38]{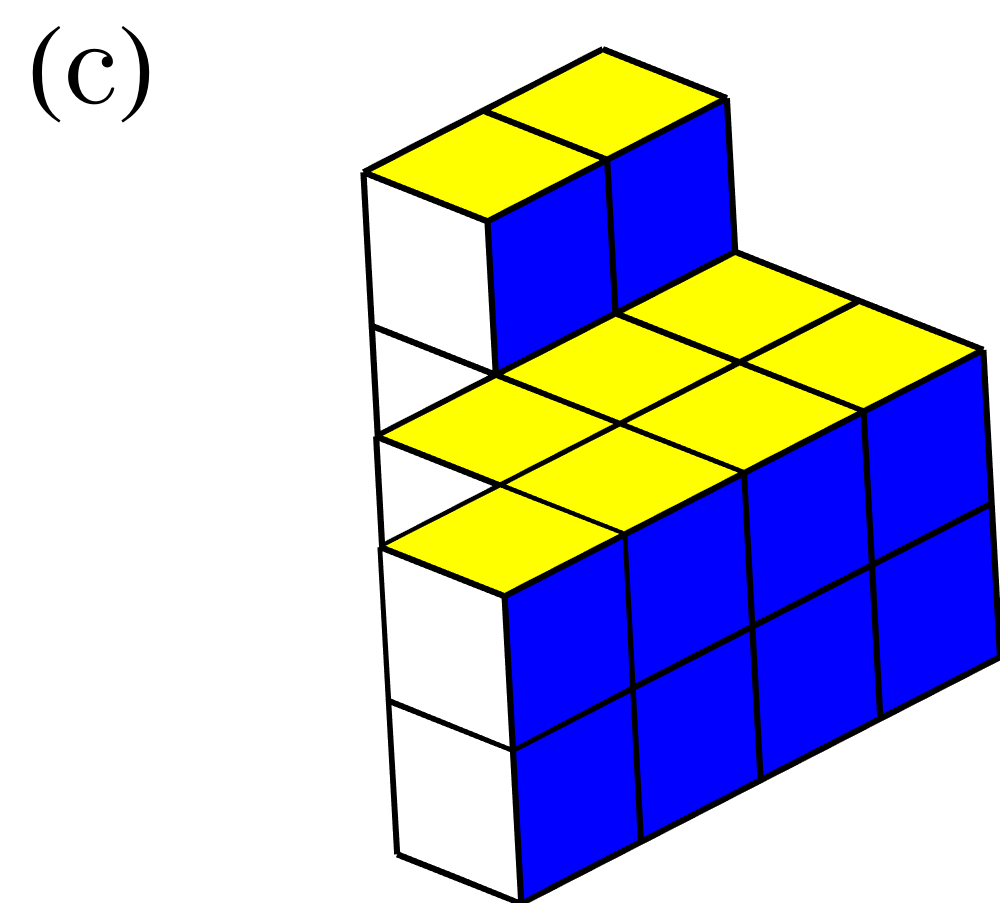} \caption{The three restricted integer partitions of 20 using exactly $m_3=4$ parts from the restricted set of integers $S(\Pi_{3})=\{0,2,4,7,10\}$. (a) $0+0+10+10 \vdash 20$, (b) $2+4+4+10 \vdash 20$ and (c) $2+4+7+7 \vdash 20$.}\label{fig:main-222}
\end{figure}

\subsection{Lattice paths}
In layman's terms, a lattice path is a path from the lower left to top right of a rectangular lattice. It may be high-dimensional, so between the extreme corners of a $d$-dimensional lattice. The area under a lattice path is major topic in combinatorics.
\begin{definition}\label{def:paths}
  Given two lattice points $A$ and $\Omega$, a set of steps $\mathbb{S}$, and an integer $m>0$, we denote by $L_{m}(A \to \Omega ;\mathbb{S})$ the set of lattice paths from $A$ to $\Omega$ with $m$ steps in the set $\mathbb{S}$.
\end{definition}
\begin{definition}
With $e_i$ denoting the vector with a 1 in the $i^{\text{th}}$ position, and 0 elsewhere, we consider
\begin{equation}
\mathcal{P}([m_{k_{1}} \times \dots \times m_{k_{n}}]) := L_{m}(\mathbf{0} \to  (m_{k_{1}},\dots,m_{k_{n}}) ; \{e_i: 1 \leq i \leq n \} )
\end{equation}
to be the set of lattice paths between $\mathbf{0}$ and $(m_{k_{1}},\dots,m_{k_{n}})$ in the lattice $[m_{k_{1}} \times \dots \times m_{k_{n}}]$.
\end{definition}Consequently,
\begin{equation}
\mathcal{P}(\mathcal{M}_{d}) = L_{m}(\mathbf{0} \to  \mathbf{m} ; \{e_i: 1 \leq i \leq d \} )
\end{equation}
is the set of lattice paths between $\mathbf{0}$ and $\mathbf{m}$ in $ \mathcal{M}_{d}$.

The lattice paths of Def. \ref{def:paths} resemble natural $d$-dimensional versions of the up-right lattice paths in $\mathcal{M}_{2}$. We can equivalently regard $\mathcal{P}({\cal M}_{d})$ as the set of
permutations of the multiset
\begin{equation}\label{e:multiset1}
  e_{1}^{m_1}e_{2}^{m_2}\dots e_{d}^{m_d} :=  \{\underbrace{e_{1},e_{1},\dots,e_{1}}_{m_1\text{ times}},\underbrace{e_{2},e_{2},\dots,e_{2}}_{m_2\text{ times}},\dots,\underbrace{e_{d},e_{d},\dots,e_{d}}_{m_d \text{ times}}\}
\end{equation}
where direction $e_{1}$ occurs $m_1$ times, $e_{2}$ occurs $m_2$ times, and so on, until all $m_1+\dots+m_{d}$ directed lattice steps have been taken, and the path arrives at the boundary point $\mathbf{m}$.

\begin{figure}[t]
\centering \includegraphics[scale=1.2]{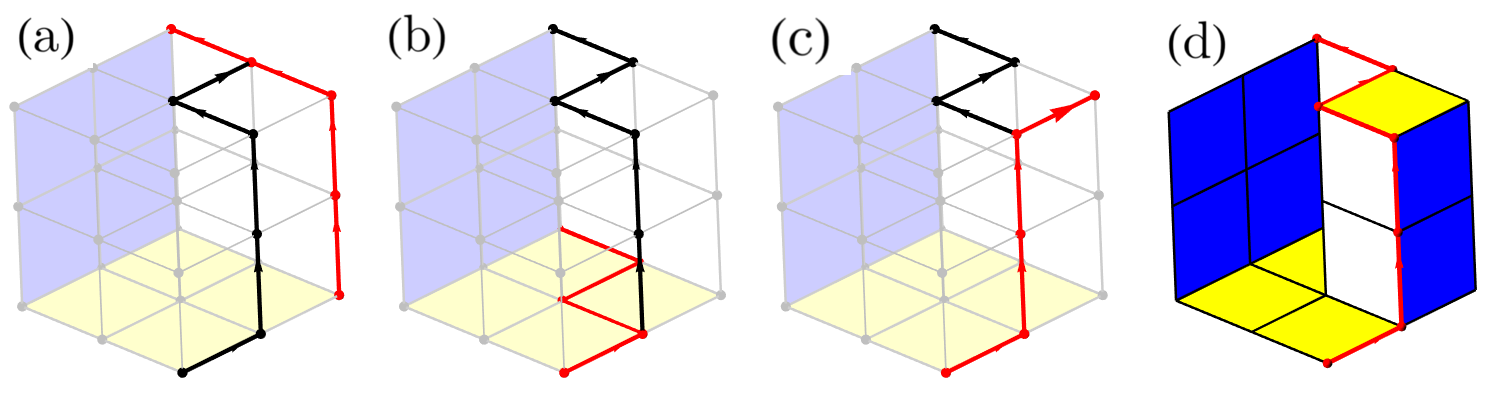}  \caption{Projecting the path onto the faces of the lattice, determining the volume beneath. With $\Pi_3=\left(e_3,e_1,e_1,e_2,e_3,e_2\right)$ (black lattice path), the steps (a)-(c) show the three projections $\Pi_{12}(\Pi_{3})=\{e_1,e_1,e_2,e_2\},\Pi_{13}(\Pi_{3})=\{ e_3,e_2,e_3,e_2\}$ and $\Pi_{23}(\Pi_{3})=\{e_3,e_1,e_1,e_3\}$ (red lattice paths) onto the white, yellow and blue faces respectively, of the lattice $\mathcal{M}_{3}=[2 \times 2 \times 2]$, used to determine the exact 3d volume shown in (d) (stacked cubes).}\label{fig:project}
\end{figure}

\subsubsection{Projections}\label{sec:projections}
In addition, for $1\leq i < j \leq d$ we denote by
$\Pi_{i,j}(\Pi_{k-1})$ the projection of the path $\Pi_{k-1}$ onto the $2$-dimensional face
$m_i\times m_j$ generated by $(e_i,e_j)$. Therefore
\begin{equation}\label{e:lproj}
  \Pi_{i,j}(\Pi_{k-1}) \in L_{m_{i}+m_{j}}(\mathbf{0} \to m_i e_i + m_j e_j ; \{e_i, e_j\}).
 \end{equation}
This is depicted in Fig.~\ref{fig:project}, where we have $e_1=(1,0,0)$ as \textit{right}, $e_1=(0,1,0)$ as \textit{up}, $e_1=(0,0,1)$ as \textit{in}, $\Pi_{2}=(e_1,e_2,e_2,e_3,e_1,e_3)$, and so $\Pi_{1,2}(\Pi_{2}) = (e_1,e_2,e_2,e_1)$, $\Pi_{2,3}(\Pi_{2})= (e_2,e_2,e_3,e_3)$, and $\Pi_{1,3}(\Pi_{2}) = (e_1,e_3,e_1,e_3)$. In the specific example of Fig.~\ref{fig:project}, since $\pi_{1,2}=(2,2)$,  $\pi_{2,3}=(0,1)$ and $\pi_{1,3}=(0,2)$, we have the restricted set of integers $S(\Pi_{3})=(0,0+2,0+2+2)=(0,2,4)$, and the partition $\pi_{2,3}^{\star} \cdot S(\Pi_{3}) \vdash 2 = (1,1,0) \cdot (0,2,4) \vdash 2= 0+2 \vdash 2$. This black lattice path is the only lattice path which corresponds to this restricted integer partition of $0+2 \vdash 2$, and so in this case $\text{Degeneracy}\left(\pi_{2,3}^{\star} \cdot S(\Pi_{3}) \vdash 2\right) = \text{Degeneracy}\left(\left(1,1,0 \right) \cdot \left(0,2,4 \right) \vdash 2)\right)=1$.

 \subsubsection{Volume under the lattice path}\label{s:ip}
 In Section \ref{sec:projections}, we discussed the projections $\Pi_{ij}(\Pi_{k-1})$ of $\Pi_{k-1}$ onto the faces of $\mathcal{M}_{k-1}$. These projections are themselves Ferrers diagrams, depicted in Fig.~\ref{fig:project}. Therefore, given a lattice path $\Pi_{k-1}$, consider the integer partition $\pi_{ij}\left(\Pi_{k-1}\right)$, which is a sequence of $m_i$ integers each of size no larger than $m_j$, and whose $t^{\text{th}}$ part is $\pi_{ij}(t)$. The context will be clear that we are considering by $\pi_{ij}(t)$ the $t^{\text{th}}$ part of the integer sequence $\pi_{ij}\left(\Pi_{k-1}\right)$.

We now define the volume under the lattice path. Consider the $(k-1)$-dimensional lattice cell
 \begin{align}
  \mathrm{cell}\left(x_1,\dots,x_{k-1}\right) := [x_1,x_1+1] \times [x_2,x_2+1] \times \dots \times [x_{k-1},x_{k-1}+1]
 \end{align}
 and its $(u,v)$-face
  \begin{align}
  \mathrm{cell}_{u,v}\left(x_1,\dots,x_{k-1}\right) := [x_u,x_u+1] \times [x_v,x_v+1]
 \end{align}
Then, we mean that the $(k-1)$-dimensional volume under the lattice path $\Pi_{k-1}$ is
 \begin{align}\label{e:v}
  V(\Pi_{k-1}) := \# \{\mathrm{cell}_{u,v}\left(x_1,\dots,x_{k-1}\right) : x_{v} < \pi_{u,v}(x_{u}), 1 \leq u < v \leq k-1\}.
 \end{align}

 \subsubsection{Volume under the path as a restricted integer partition}
The volume $V$ defined in Eq. \ref{e:v} is an integer partition in the following way. Consider the lattice path $\Pi_{k-1}$ enclosing a volume $ V(\Pi_{k-1})$. Then consider the integer sequences
 \begin{align}
  S(\Pi_{2}) &:= \left( 0,1,\dots,m_1 \right) \\
  S(\Pi_{3}) &:= \Bigg(\sum_{i_{1}=1}^{t}\pi_{1,2}(i_1)\Bigg)_{t=0,\ldots,m_{2}}\\
  S(\Pi_{4}) &:= \Bigg(\sum_{i_{2}=1}^{t} \sum_{i_{1}=1}^{\pi_{2,3}(i_{2})}\pi_{1,2}(i_1)\Bigg)_{t=0,\ldots,m_{3}}\\
  S(\Pi_{5}) &:=\Bigg(\sum_{i_{3}=1}^{t} \sum_{i_{2}=1}^{\pi_{3,4}(i_{3})}\sum_{i_{1}=1}^{\pi_{2,3}(i_{2})}\pi_{1,2}(i_1)\Bigg)_{t=0,\ldots,m_{4}}\\
 &\;\;\; \vdots \nonumber \\
S(\Pi_{k-1}) &:= 
\Bigg(
\sum_{i_{k-3}=1}^{t} \sum_{i_{k-2}=1}^{\pi_{k-3,k-2}(i_{k-3})} \cdots  \sum_{i_{1}=1}^{\pi_{2,3}(i_{2})}
\pi_{1,2}(i_{1})
\Bigg)_{t = 0,\ldots , m_{k-2}}.\label{e:ss1}
\end{align}
where the empty sum is the first element in each sequence $S$, since the case $t=0$ gives a sum with no summands each time, and so gives zero by definition.
 We write $S(\Pi_{k-2}) = \left(S_{i}(\Pi_{k-2})\right)_{i=0,\dots,m_{k-3}}$ to distingusihing the individual parts. Recall that the multiplicity of each part of the partition $\pi_{ij}$ is denoted $\pi_{ij}^{\star}$, so for example the partition $8=1+1+3+3$ may be represented by the sequence $\pi_{ij}=(1,1,3,3)$, and $\pi_{ij}^{\star}=(0,2,0,2,0)$, where we have considered the case where we use four parts from the set $\{0,1,2,3,4\}$.

 Now consider, alongside Eq.~\eqref{e:ss1}, the integer partition $\pi_{k-2,k-1}(\Pi_{k-1})$. Then, consider the integer $n$, and the integer partition $\Lambda(\Pi_{k-1}) \vdash n$ corresponding to the volume under the lattice path. Then 
 \begin{equation}
 \Lambda(\Pi_{k-1}) = \pi_{k-2,k-1}^{\star} \cdot  S(\Pi_{k-2}).
    \end{equation}
$\Lambda(\Pi_{k-1})$ is therefore a restricted integer partition of the volume under the path $\Pi_{k-1}$, using exactly $m_{k-1}$ parts from the set $S(\Pi_{k-2})$. This is depicted in Fig. \ref{fig:main-222}, where the three restricted integer partitions of 20 using exactly $m_3=4$ parts from the restricted set of integers $S(\Pi_{3})=\{0,2,4,7,10\}$ are shown.

 \subsection{Integer partitions and generating functions}
We use the notation $\Lambda \vdash n$ to denote that $\Lambda$ is an integer partition of $n$. For example, with $n=20$, we write $2+4+4+10 \vdash 20$ when $\Lambda=(2,4,4,10)$. The \textit{partition function} $p(n)$ counts the number of possible partitions of an integer $n$. Two sums differing only by the order of their summands are not considered to be distinct partitions. They are however distinct \textit{compositions}. It is common to use a generating function
\begin{equation}
\sum_{n=0}^\infty p(n)q^n = \prod_{r=1}^\infty \frac {1}{1-q^r}. 
\end{equation} 
Consider the number $p(n,r)$ of integer partitions of $n$ into exactly $r$ parts. Then we have a generating function in two variables,
\begin{equation}
  \sum_{r=0}^\infty u^{r}
  \sum_{n=r}^\infty p(n,r) q^n = \prod_{r=1}^\infty \frac {1}{1-u q^r}, 
\end{equation}
and define the linear \textit{coefficient extraction operator} 
 $[u^{n}]A(u)$, acting on any formal power series 
\begin{equation}
  A(u) =  \sum_{n \geq 0}a_{n} u^{n}
  \end{equation}
  as the operator which extracts the $n^{\text{th}}$ coefficient in the series,
 \begin{equation}\label{e:ceo1}
  [u^{n}]A(u) := a_n, \qquad n \geq 0.
 \end{equation}
Then, we may write the integer partition function
\begin{equation}
   p(n,r) = [q^{n}][u^{r}]\prod_{r=1}^\infty \frac {1}{1-u q^r}, 
\end{equation}
Given a set of integers $S$, the number of integer partitions $p_S(n,r)$ of $n$ into $r$ parts chosen from the set $S$ satisfies
 \begin{equation}\label{e:fmain} 
  \sum_{r=0}^\infty u^{r}
  \sum_{n=r}^{r\max ( S) } p_S (n,r)q^n
 = \prod_{k \in S} \frac {1}{1-u q^k}, 
\end{equation} 
 The probability generating function of the area under a uniformly random lattice path from $(0,0)$ to $(m_1,m_2)$ in $\mathcal{M}_{2}$ is given by the $q$-binomial coefficient, which is, for $r \leq m$,
 \begin{equation}\label{e:qbin}
{m \choose r}_q=
\frac{(1-q^m)(1-q^{m-1})\cdots(1-q^{m-r+1})} {(1-q)(1-q^2)\cdots(1-q^r)}
 \end{equation}
 This may be written
\begin{equation}
{m_1+m_2 \choose m_1}_q
= \sum_{n=0}^{ m_1 m_2 } p_{\{ 0,1, \ldots , m_1 \}} (n,m_2)q^n.
\end{equation}
giving a generating function for restricted integer partitions in $m_2$ parts no larger than $m_1$.

\subsection{Example case}\label{sec:example}
Consider Figs. \ref{fig:3hops}, \ref{fig:3hops2}, \ref{fig:used} and \ref{fig:k4-1}. A simple example case is the following, for the case $k=4$. We have the cuboid lattice $\mathcal{M}_{3} = [m_1 \times m_2 \times m_3]$. A lattice path starting at $(0,0,0)$ and terminating at $(m_1,m_2,m_3)$ consists of three types of movements, $e_1$, $e_2$ and $e_3$ (right, up, and in). The lattice path $\Pi_{k-1}=\Pi_{3}$ encloses a 3d volume beneath, bounded by the the domain walls, and itself, in a new way discussed in Section \ref{s:ip}. Each of the six projections $\Pi_{1,2},\Pi_{1,3},\dots,\Pi_{3,2}$ onto the six faces of the cuboid are themselves Ferrers diagrams. Consider the example lattice path shown in Fig. \ref{fig:main-22} (a), which is $\Pi_{4} = (\text{up},\text{up},\text{in},\text{right},\text{in},\text{right},\text{right},\text{up},\text{in},\text{in},\text{right},\text{up})$. Then we have, on the white face, $\Pi_{1,2}=(\text{up},\text{up},\text{in},\text{in},\text{up},\text{in},\text{in},\text{up})$ and therefore the corresponding partition $\pi_{1,2}=(2,2,3,3)$, as well as, on the blue face, \begin{equation}\Pi_{1,3}=(\text{up},\text{up},\text{right},\text{right},\text{right},\text{up},\text{right},\text{up})\end{equation} and therefore the corresponding partition $\pi_{1,3}=(2,2,2,3)$, and finally on the yellow face $\Pi_{2,3}=(\text{in},\text{right},\text{in},\text{right},\text{right},\text{in},\text{in},\text{right})$ and therefore the corresponding partition $\pi_{2,3}=(1,2,2,4)$. The projections $\Pi_{2,1},\Pi_{3,1}$ and $\Pi_{3,2}$ are the same as their counterparts $\Pi_{1,2},\Pi_{1,3}$ and $\Pi_{2,3}$, since they represent the projection of the lattice path onto the opposing face. In our case, we therefore have
\begin{align}
  S(\Pi_{3}) &= \Bigg(\sum_{i_{1}=1}^{t} \pi_{1,2}(i_1)\Bigg)_{t=0,1,\ldots,4}\\
  &= \left(0,\pi_{1,2}(1),\pi_{1,2}(1)+\pi_{1,2}(2),\dots,\pi_{1,2}(1)+\pi_{1,2}(2)+\pi_{1,2}(3)+\pi_{1,2}(4)\right)\\
    &= \left(0,2,4,7,10\right)
\end{align}
For the multiplicities and part counts, we have $\pi_{2,1}=(0,0,2,4)$, $\pi_{3,1}=(0,0,3,4)$, $\pi_{3,2}=(0,1,3,4)$, $\pi_{2,1}^{\star}=(2,0,1,0,1)$, $\pi_{3,1}^{\star}=(2,0,0,1,1)$ and $\pi_{3,2}^{\star}=(1,1,0,1,1)$. We would also have
\begin{align}
  S(\Pi_{4}) &= \Bigg(\sum_{i_{2}=1}^{t} \sum_{i_{1}=1}^{\pi_{2,3}(i_{2})}\pi_{1,2}(i_1)\Bigg)_{t=0,\ldots,4}\\
  &= \Bigg(0,\sum_{i_{1}=1}^{1}\pi_{1,2}(i_1),\dots, \sum_{i_{1}=1}^{1}\pi_{1,2}(i_1)+\sum_{i_{1}=1}^{2}\pi_{1,2}(i_1)+\sum_{i_{1}=1}^{2}\pi_{1,2}(i_1)+\sum_{i_{1}=1}^{4}\pi_{1,2}(i_1)\Bigg)\\
    &= \left(0,2,\dots,2+(2+2)+(2+2+3)+(2+2+3+3)\right)\\
    &= \left(0,2,6,10,20\right)
\end{align}

\section{Proofs}\label{sec:allres}

\begin{figure}[t]
\centering \includegraphics[scale=0.38]{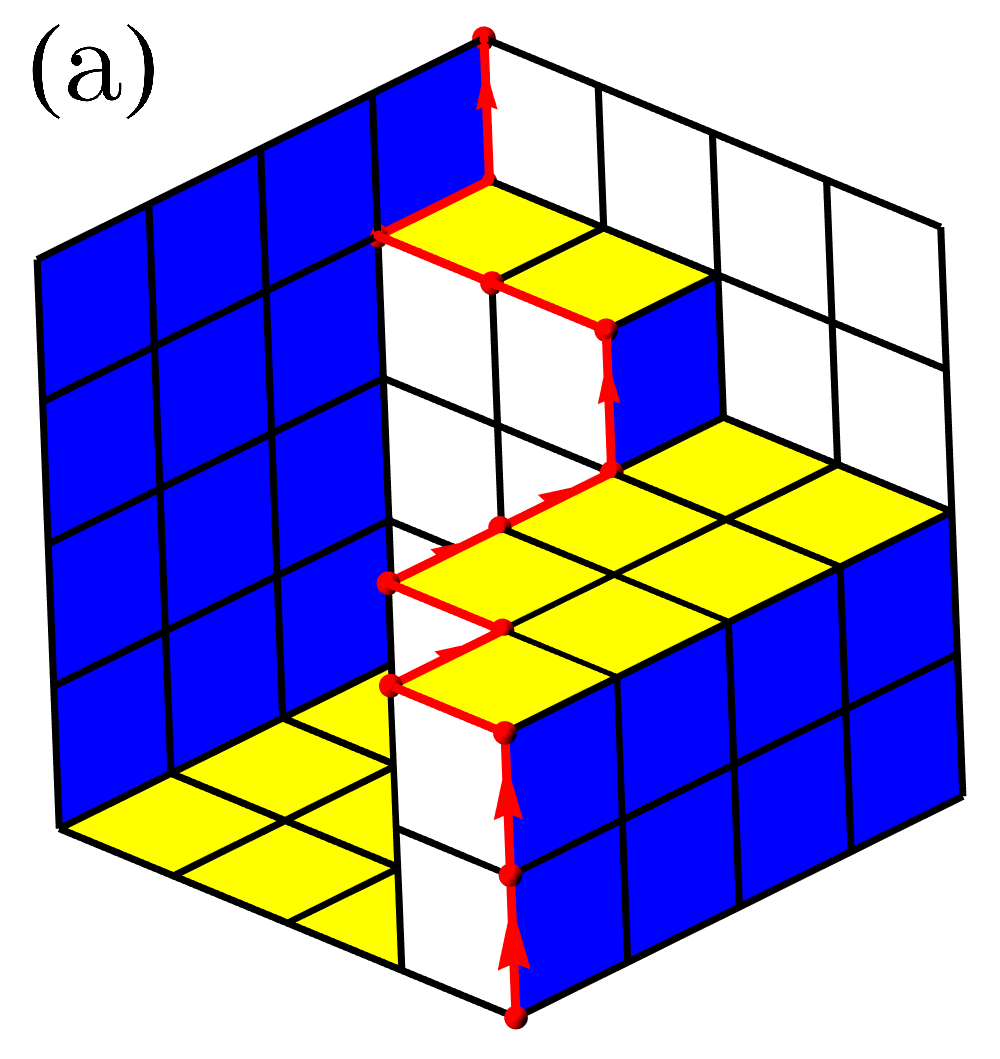} \hspace{2mm} \includegraphics[scale=0.38]{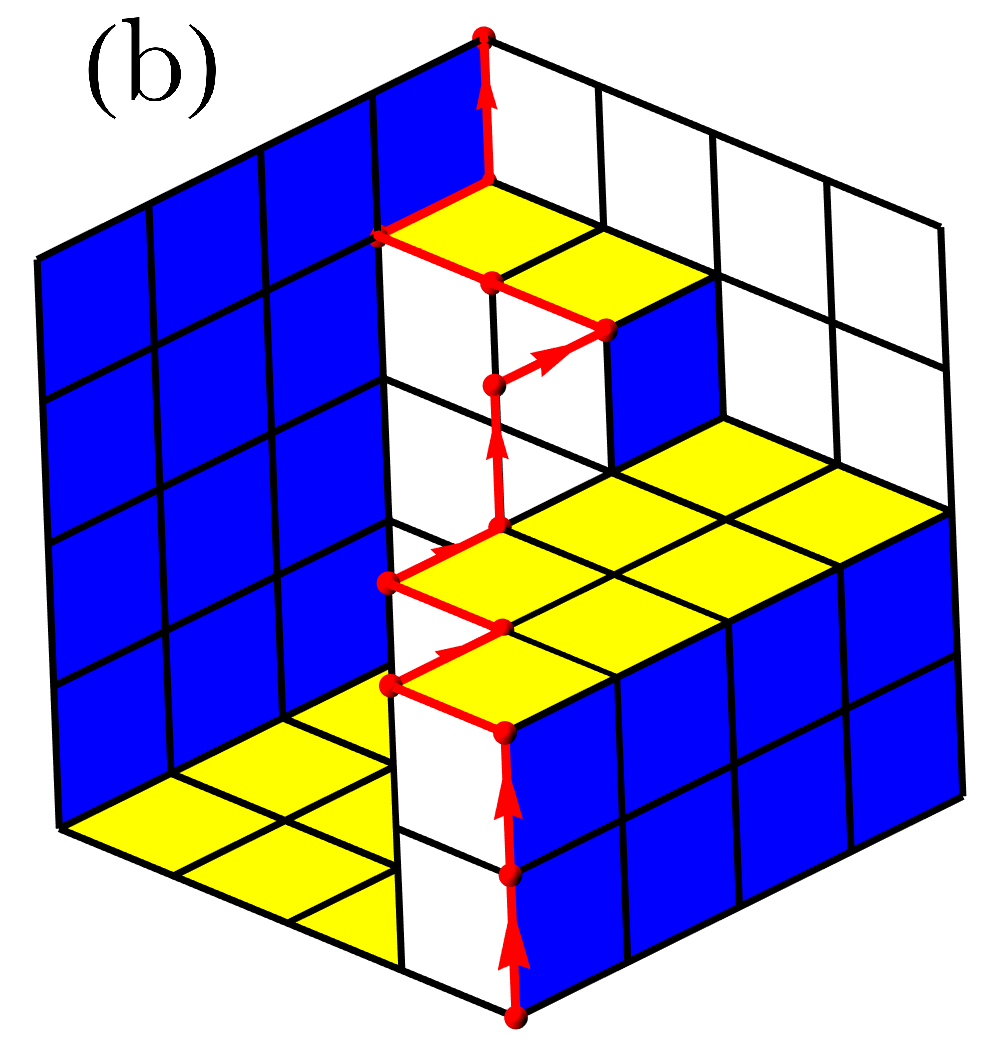} \hspace{2mm} \includegraphics[scale=0.38]{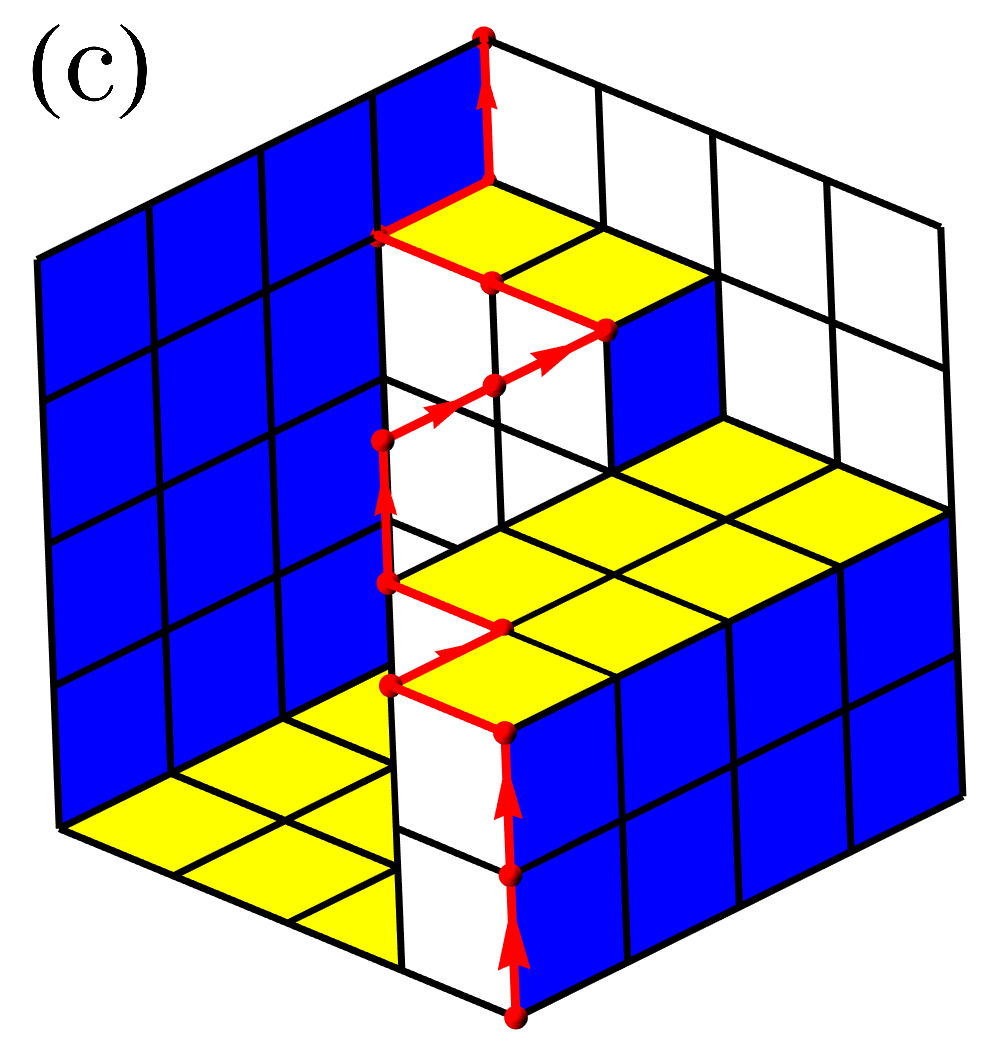} \\ \includegraphics[scale=0.38]{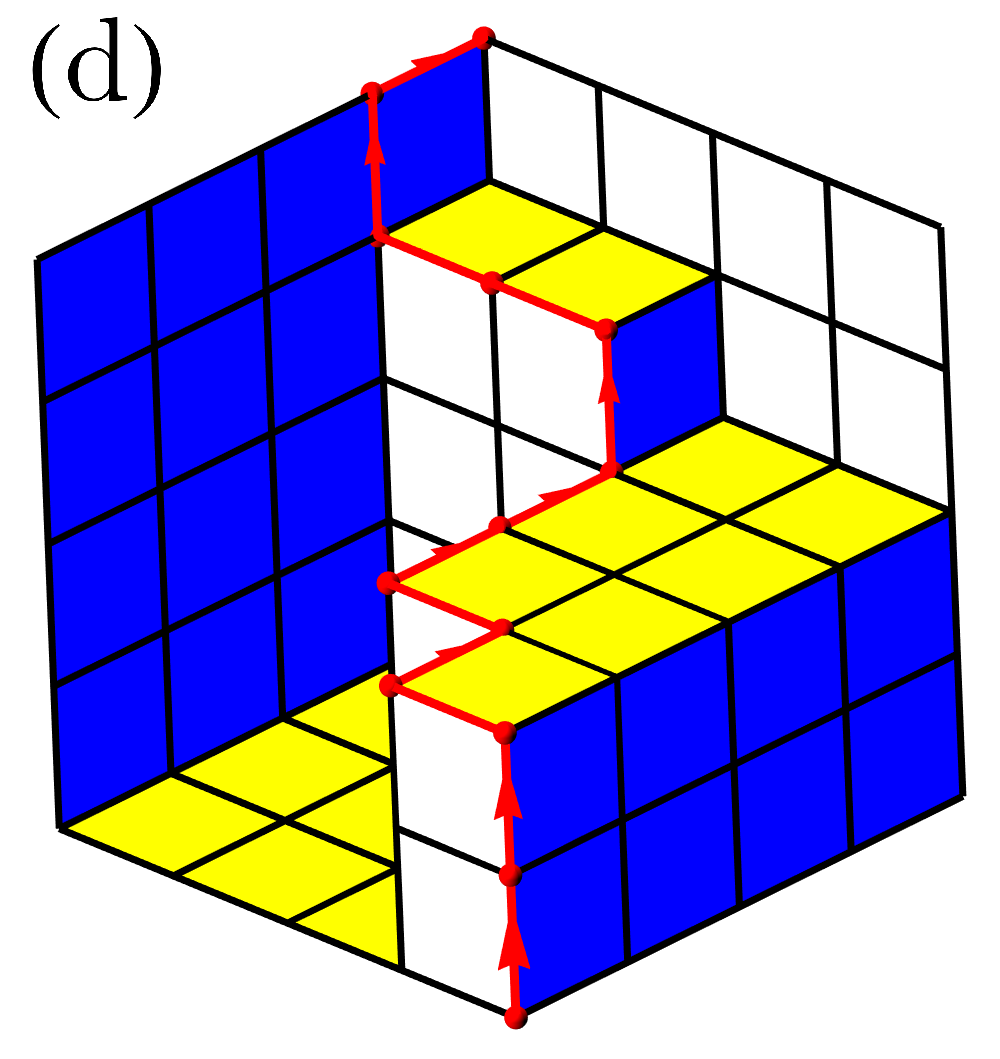} \hspace{2mm} \includegraphics[scale=0.38]{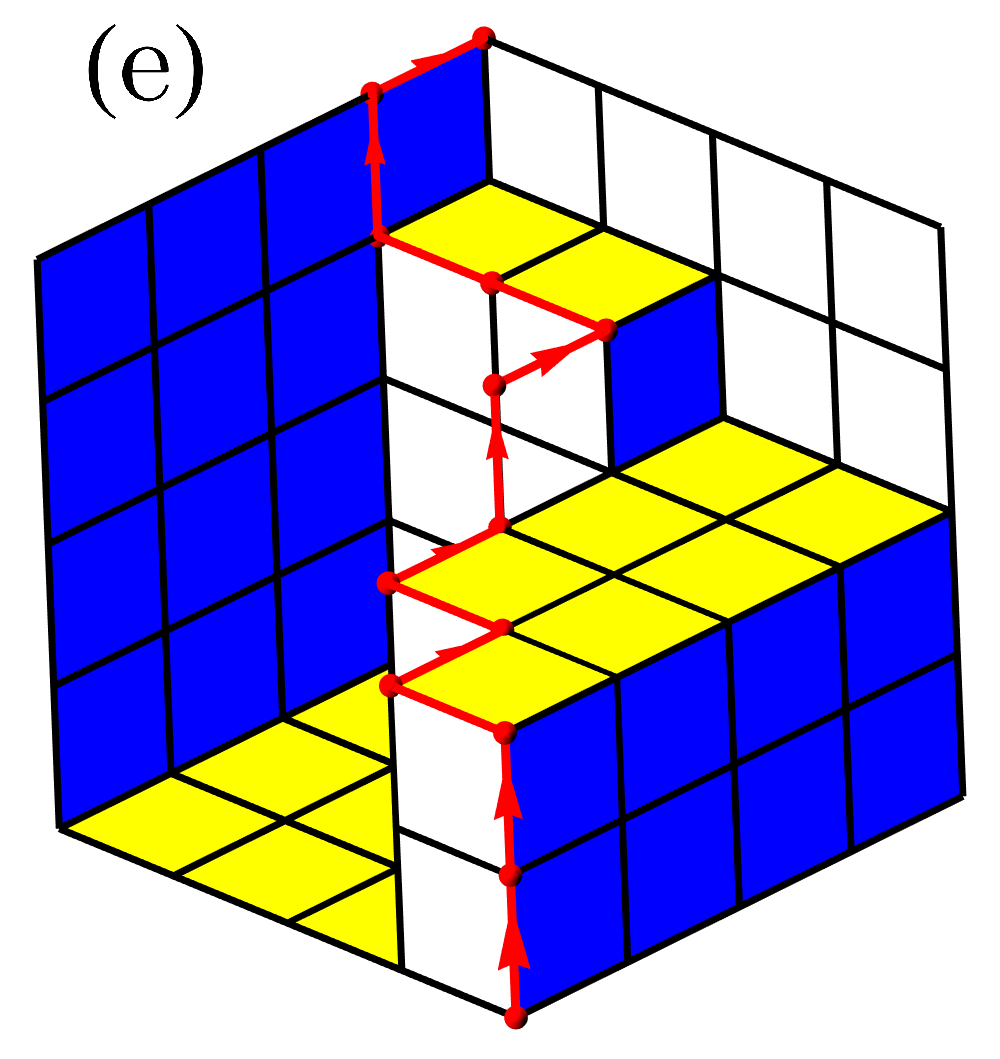} \hspace{2mm} \includegraphics[scale=0.38]{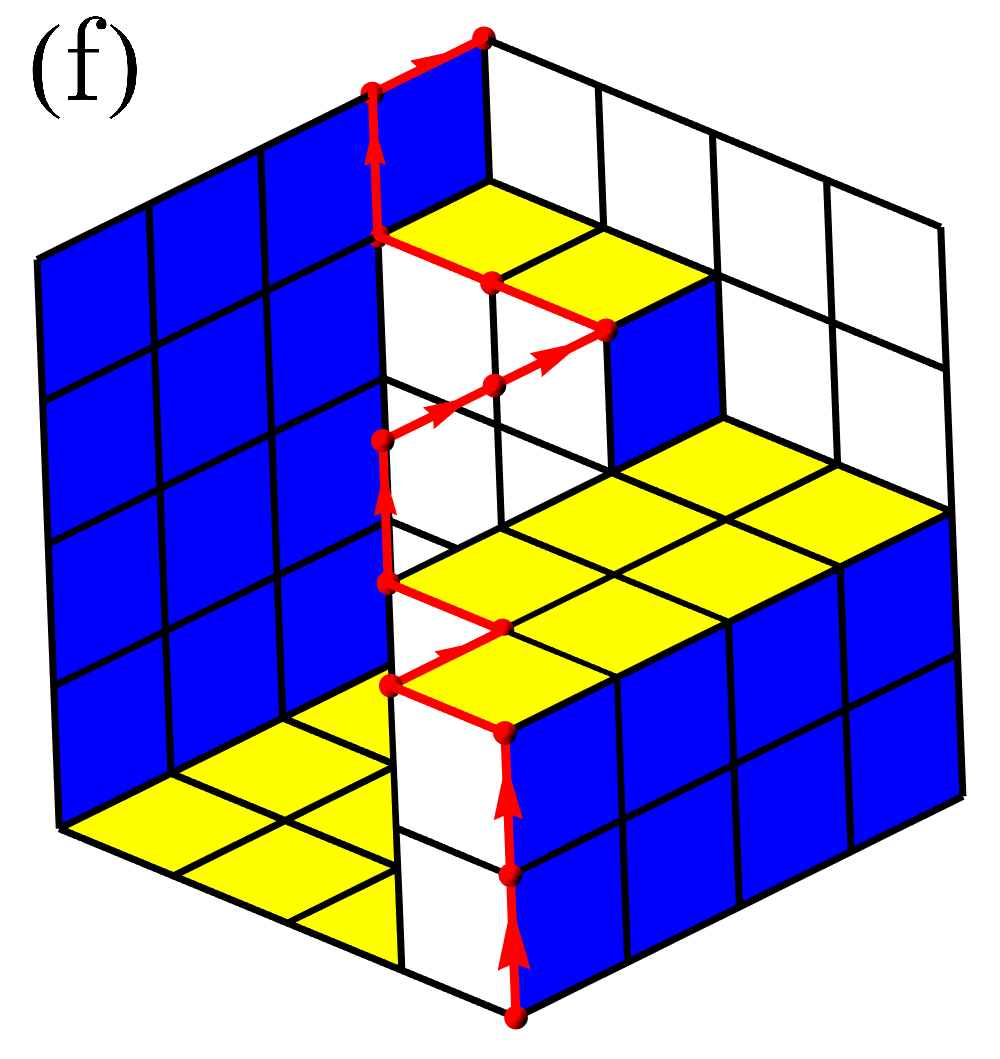}
\caption{Six lattice paths between $\mathbf{0}$ and $(4,4,4)$ in the lattice $[4 \times 4 \times 4]$. Each path encloses the same area, and the area is the same partition of $20=2+4+4+10$. The degeneracy in this case is therefore six.}
    \label{fig:main-22}
\end{figure}

\subsection{Distribution of $k$-hop path counts}\label{sec:results}

Consider $m_1,\dots,m_{k-1} \sim \textrm{Poisson}(\lambda|L|)$, where $|L|$ is given by Eq.~\eqref{e:width}.   We denote by $\sigma_{k}(0,1)=\sigma_{k}$ the number of $k$-hop paths between the vertices $0,1$ in the vertex set of the graph $G(V,E)$. In this section, for $n \leq 0$, $k \geq 2$, we compute the p.m.f. $P(\sigma_{k}=n)$ for all $k$. 
We now detail the proof of Theorem \ref{t:p1-2}, with a short introduction to the intersecting intervals known as \textit{lenses}.
\subsubsection{Lenses}
\begin{figure}
\centering
\includegraphics[scale=0.265]{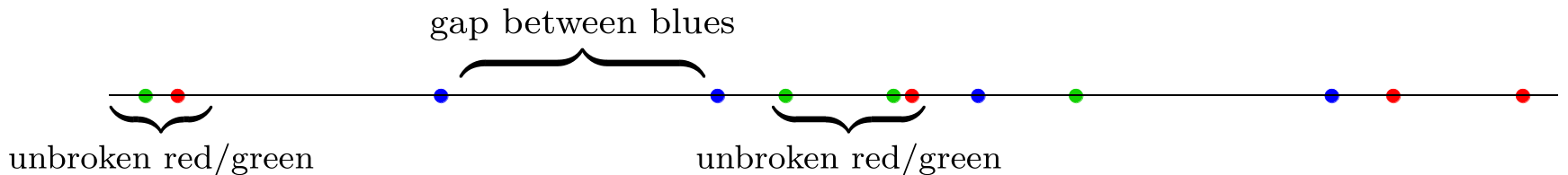}

\caption{The overlapped lenses from Fig. \ref{fig:used}. The are five ``gaps'' between the blue balls, in which lie unbroken sequences of red and green balls.}
\label{fig:k51}
\vspace{4mm}
\includegraphics[scale=0.265]{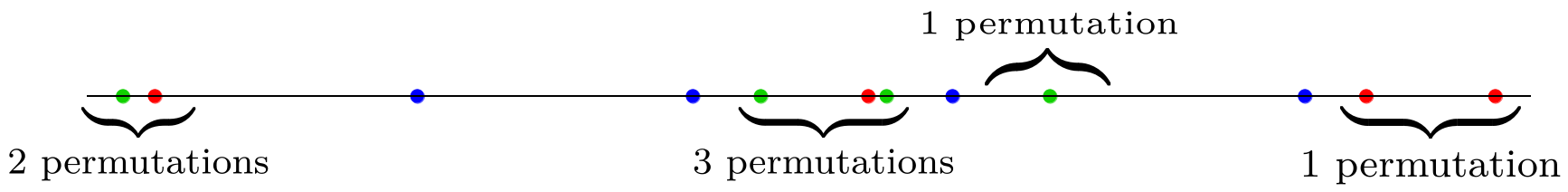}

\caption{The permutation of Fig. \ref{fig:used}, but with two of the nodes in the 3rd blue gap (from the left) swapped. This gives a new lattice path, but not a new integer partition of the number of paths. The product of the number of ways of permuting the red and green nodes, within each a blue gap, leads to the partition degeneracy of Eq.~\eqref{e:prodbin1}.}
\label{fig:k52}
\end{figure}

Since in the case $r_0 \leq 1/k$ we necessarily have $\sigma_{k}=0$, in what follows, we assume that $r_0 > 1/ k$.
With $B(a,b)$ the ball centered at $a$ of radius $b$, we define disjoint intervals $L_1  < \cdots < L_{k-1}$,
\begin{equation}
L_j := B(0,jr_0) \cap B(1,(k-j)r_0), \qquad j = 1,\ldots , d, 
\end{equation}
called ``lenses'', of same length
\begin{equation}\label{e:width}
  |L_i|=kr_0 - 1>0,
\end{equation}
where, using $\#A$ for the cardinality of a set $A$, the number of points in the intersection of the lens and the points $P_{\lambda}$,
\begin{equation}
 \#\{\mu: \mu \in L_j \cap P_{\lambda}\} = m_j.
\end{equation}
 Importantly, we also have an upper bound on the connection range $r_0$, where we therefore restrict
\begin{equation}
\frac{1}{k} < r_0 < \frac{1}{k-1}
\end{equation}
to stop the lenses overlapping.
We therefore consider i.i.d. random variables $m_1,\dots,m_{k-1} \sim \textrm{Poisson}(\lambda|L|)$.

\subsubsection{Overlapping lenses}\label{sec:overlap}
The lenses will overlap for large enough values of $r_0$ for each choice of $k$, not considering the restriction of Eq. \ref{e:range1}, but since the paths will still hop between the lenses sequentially, this does not affect the combinatorial idea of lattice path enumeration determining the $k$-hop path count $\sigma_{k}$. See e.g. Fig \ref{fig:3hops} for a depiction of the non-overlapping case for $k=3$, and Fig. \ref{fig:3hops2} for a depiction in the case $k=4$. We leave the complete solution to all ranges of $r_0$ to a later work, focusing on showing as clearly as possible this link with lattice path combinatorics, and integer partitions.

\subsubsection{Proof}

\begin{proof}[Proof of Theorem \ref{t:p1-2}]
  When $k=1$, then $\sigma_{1}=\mathbf{1}_{\{r_{0}\geq1\}}$. When $k=2$ there exists a two-hop path if and only if $L_1$ contains $m_1>0$ vertices of $\mathcal{X}$, and $\sigma_{2} \sim \text{Poisson}(2r_0 - 1)$. Now consider $k=3$. This is depicted in Fig. \ref{fig:3hops}. By overlapping the lenses and considering the relative locations of the nodes
in $L_1$ and $L_2$ in a new lens of unit width, we represent the locations of nodes $\{1,\ldots, m_1\}$ in $[0,|L_{1}|]$, or of the nodes $\{m_1,\ldots , m_1+m_2\}$ in $[0,|L_{2}|]$ using a sequence $\{Y_{i}\}_{i=1}^{m_{1}+m_{2}}$, with $ Y_i \sim \text{Uniform}[0,1]$, then letting
\begin{equation}
 X_i := \begin{cases}
1 & \text{for  } 1 \leq i \leq m_1 \\
2 & \text{for  } m_1 + 1 \leq i \leq m_1 + m_2,
  \end{cases}
\end{equation}
 then $\sigma_3$ may be written 
 \begin{equation}\label{e:janson}
  \sigma_{3} =  \sum_{j=1}^{m_2}\sum_{i=1}^{m_1}\mathbf{1}_{\{X_{i} > X_{j}\}} \mathbf{1}_{\{Y_{i} < Y_{j}\}}
\end{equation} 
Since the $m_1 +m_2$ nodes are in a specific permutation given by their relative locations $Y_i,1 \leq i \leq m_1+m_2$, we observe that $\sigma_{3}$ can be written as the area under a 2d lattice path $\Pi_{2} \in \mathcal{M}_2$ of $m_1 + m_2$ steps. For example, with $e_1=(1,0)$ a right turn, and $e_2=(0,1)$ an up turn, Fig. \ref{fig:3hops} depicts the case
\begin{equation}\label{e:ex1}
  \Pi_{k-2} =  (e_2,e_1,e_1,e_2,e_1,e_2,e_1,e_2)
  \end{equation}
where we are reading the steps from the right end of $L_2$, to its left end. The volume this path encloses is the Ferrers diagram of the  integer partition $\pi_{1,2}=(1,1,2,3)$, so $\pi_{1,2} \vdash \sigma_{3}$, and $\sigma_{3}=1+2+3+3=7$.

Notice that each lattice path $\Pi_{2} \in \mathcal{P}(\mathcal{M}_{2})$ occurs uniformly at random, and so we have
\begin{equation}
\mathbb{P}(\sigma_{3}=n) = \frac{1}{ {m_1 +  m_2 \choose m_1 }} \# \{\Pi_{2} \in \mathcal{P}(\mathcal{M}_{2}) : \pi_{1,2} \cdot S(\Pi_{2}) = n \}
\end{equation}
which implies Eq.~\eqref{e:3hopsprop}.

For each case $k \geq 2$, we have a sequence of integers $S(\Pi_{k-1})$ out of which we select $m_{k-1}$ parts to from a partition $\pi_{k-2,k-1}^{\star} \cdot S(\Pi_{k-2}) \vdash \sigma_{k}$. For $n \geq 0$, the more partitions $\pi_{k-2,k-1}^{\star} \cdot S(\Pi_{k-2}) \vdash n$ restricted to the set $S$, and the more sets $S$ which can provide at least one integer partition of $n$, the greater $\mathbb{P}(\sigma_{k}=n)$.
  An example for the case $k=4$ is given in Section \ref{sec:summary}.

  The main difference from the case $k \leq 3$ is the idea of \textit{partition degeneracy}. This is depicted in Figs. \ref{fig:3hops2}, \ref{fig:used} and \ref{fig:k4-1}, as well as in Figs. \ref{fig:k51} and \ref{fig:k52}. This is the following observation. A given integer partition $\pi_{k-2,k-1}^{\star} \cdot S(\Pi_{k-2}) \vdash \sigma_{k}$ is determined by a fixed set of sub-partitions $\pi_{1,2},\pi_{2,3},\dots,\pi_{k-2,k-1}$, but is insensitive to the other details of $\pi_{1,3},\pi_{1,4},\dots$ i.e. where the subscripts are not adjacent integers. Altering the details of $\pi_{1,3},\pi_{1,4},\dots$ etc. will modify only the corresponding lattice path $\Pi_{k-1}$, but not the enclosed volume $V(\Pi_{k-1})$, nor the partition $\pi_{k-2,k-1} \cdot S(\Pi_{k-2}) \vdash \sigma_{k}$. As such, in the case $k \geq 4$, each integer partition of $\sigma_{k}$ may have more than one lattice path to which it corresponds. The number of different lattice paths corresponding to a specific integer partition $\pi_{k-2,k-1} \cdot S(\Pi_{k-2}) \vdash \sigma_{k}$ is the integer-valued partition degeneracy
 \begin{equation}\label{e:prodbin1}
 \text{Degeneracy}(\pi_{k-2,k-1}^{\star} \cdot S(\Pi_{k-2}) \vdash \sigma_{k}) = \prod_{t=0}^{m_{k-2}} {\pi_{k-2,k-1}^\star (t)+\sum_{l=1}^{k-3} \pi_{k-2,l}^\star (t) \choose \pi_{k-2,k-1}^\star (t)}.
 \end{equation}
  In the example in Fig. \ref{fig:main-22} there are six paths (a)-(f), and so $\text{Degeneracy}(\pi_{2,3}^{\star} \cdot S(\Pi_{3}) \vdash \sigma_{4}) = \text{Degeneracy}(\{0,1,2,0,1\} \cdot \{0,2,4,7,10\} \vdash 20)=\text{Degeneracy}(2+4+4+10 \vdash 20) = 6$. See also Figs. \ref{fig:large1} and  \ref{fig:large2}.
Eq.~\eqref{e:prodbin1} is then summed over all partitions $\pi_{k-2,k-1}^{\star} \cdot S(\Pi_{k-2}) \vdash n$ to give the theorem.
\end{proof}

\begin{figure}
  \centering \includegraphics[scale=0.6]{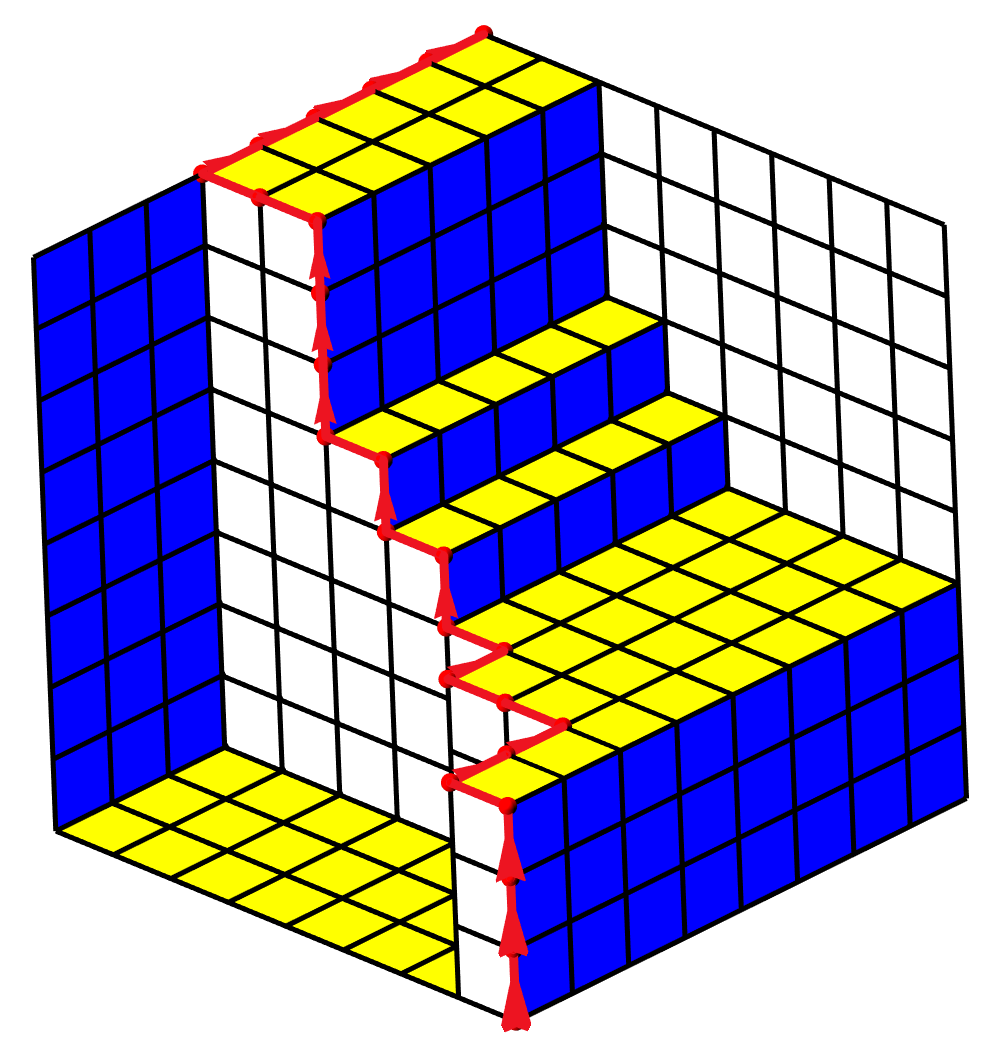}
  \caption{Partition degeneracy: (a) This partition $3+3+9+27+27+27+27+27 \vdash 150$ has a partition degeneracy of zero, since one cannot alter the red lattice path without altering the exact arrangement of the cubes beneath. It has no right turns followed by up turns, and vice versa, nor does the path walk on the walls of the cube.}\label{fig:large1}
  \includegraphics[scale=0.6]{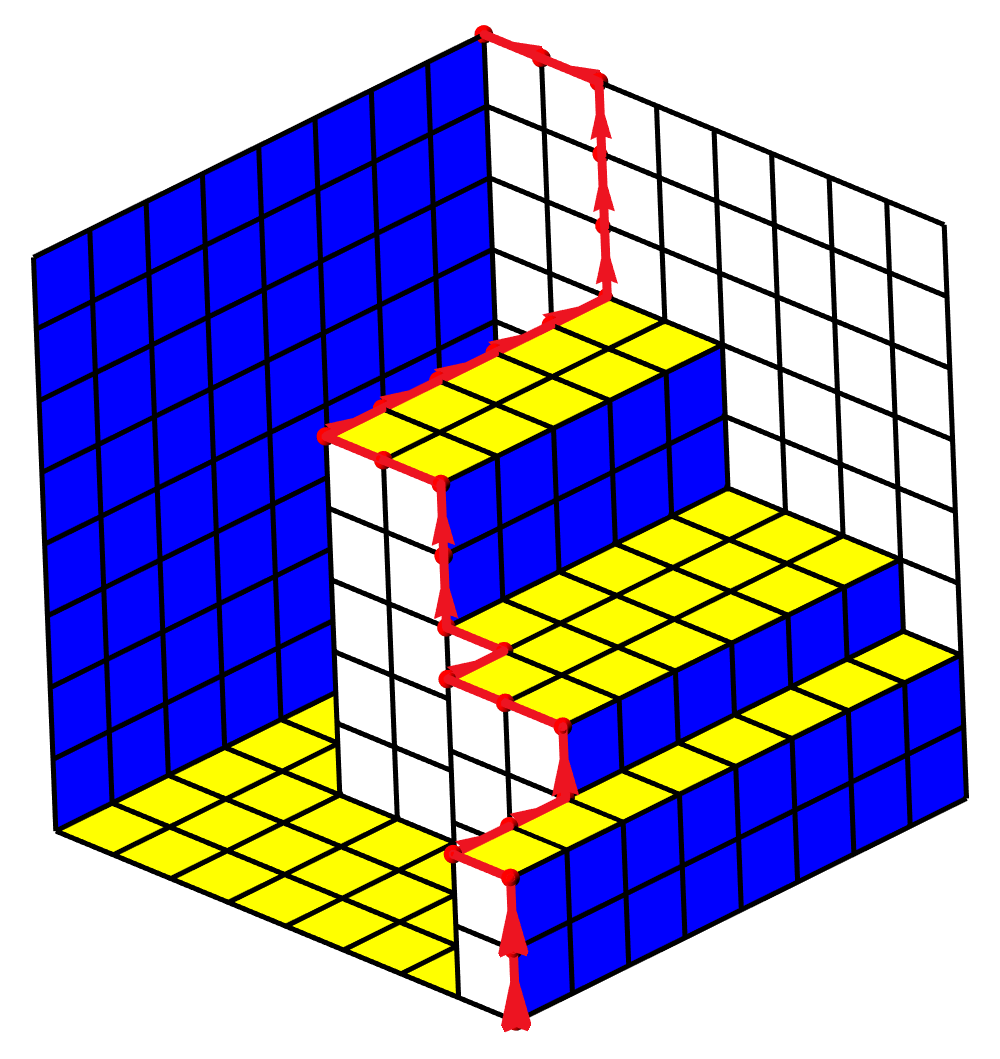}
\caption{(b) We depict $2+2+8+21+21+21+21+21 \vdash 117$. The partition degeneracy is this case is 13. Walking on the walls of $\mathcal{M}_3$ corresponds to a partition degeneracy, as can seen by the 10 different choices of routes in the final 5 steps on the red lattice path. This corresponds to the placement of balls in bins with label zero, which may be altered without affecting the overall partition. Counting these carefully may lead to a formula for the probability of zero paths.}
\label{fig:large2}
\end{figure}

\subsection{Probability generating function of $k$-hop path counts}\label{sec:gen}

\begin{figure}
\centering

\includegraphics[scale=0.3]{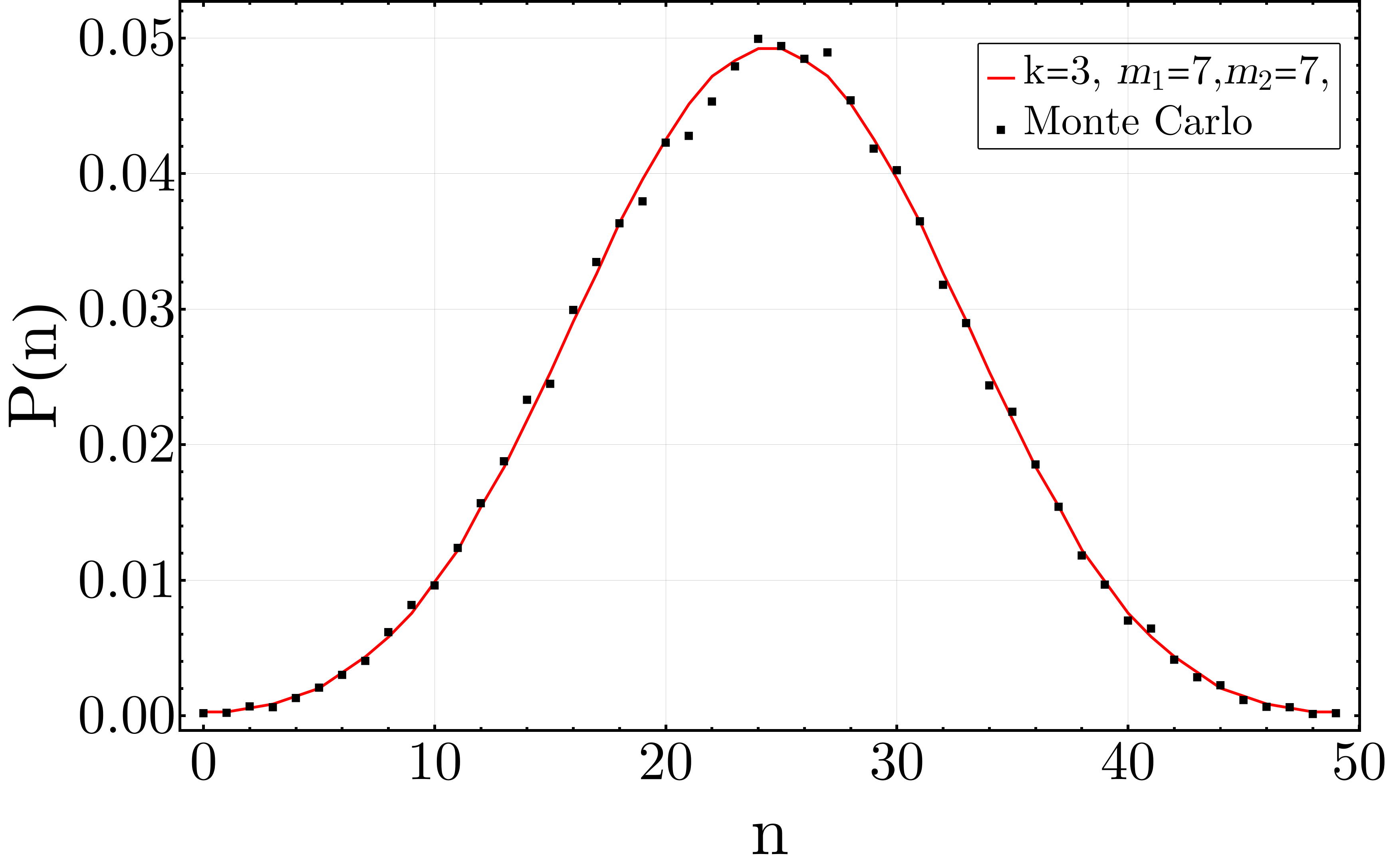}

\caption{Monte Carlo simulations (black dots), compared with our analytical calculations of Eq. \ref{e:eq1} (red line). We take $k=3$, and with two lenses in this case, we have $m_1,m_2=7$.}
\label{fig:k81}
\end{figure}

\begin{figure}
\centering
\includegraphics[scale=0.3]{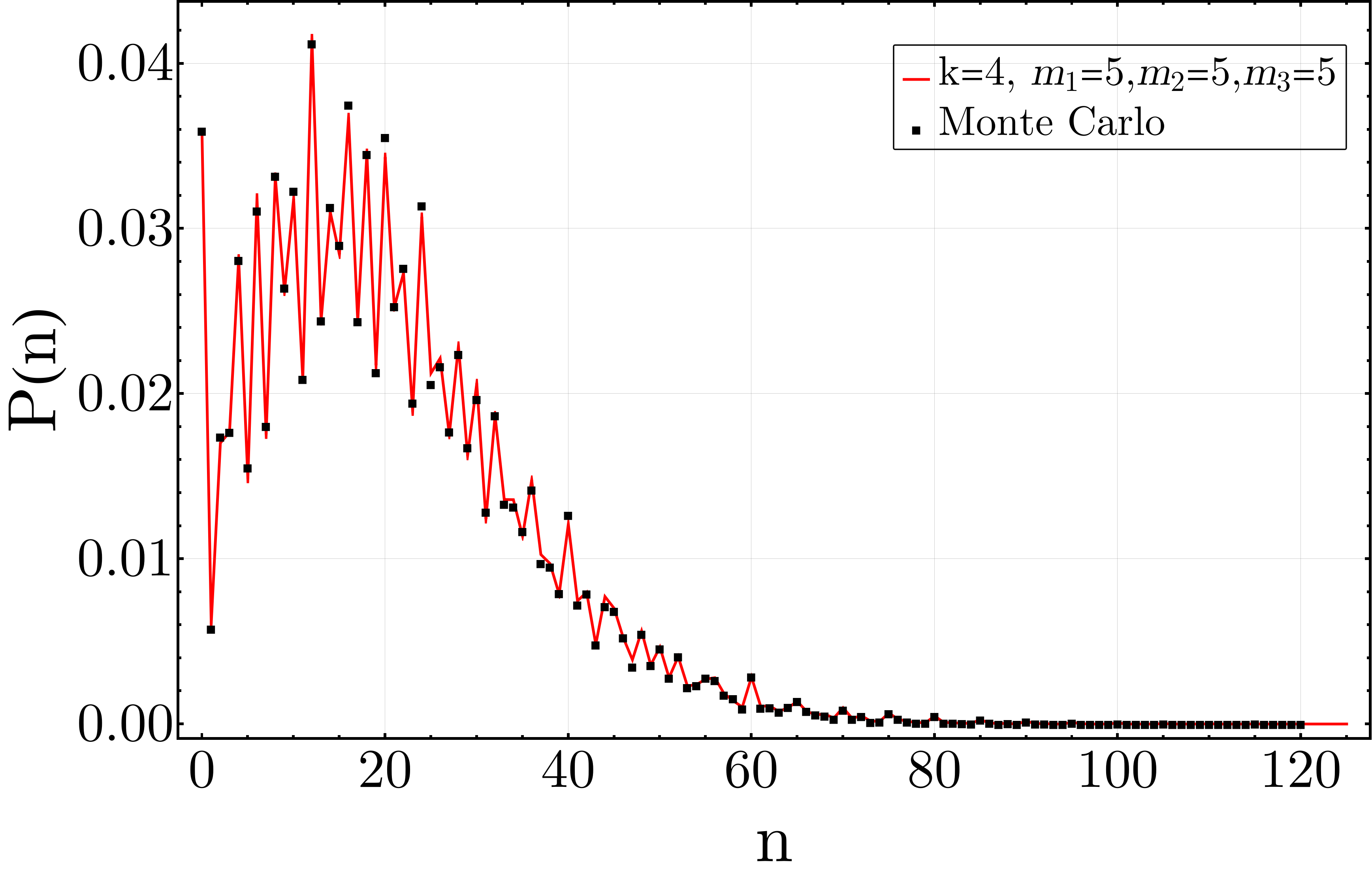}
\caption{Monte Carlo simulations (black dots), compared with our analytical calculations of Eq. \ref{e:eq1} (red line). We take $k=4$, and with three lenses in this case, we have $m_1,m_2,m_3=5$.}
\label{fig:k82}
\end{figure}

\begin{proof}[Proof of Theorem \ref{t:p1-4}] 
Recall that when $k=1$, then $\sigma_{1}=\mathbf{1}_{\{r_{0}\geq1\}}$, and when $k=2$ there exists a two-hop path if and only if $L_1$ contains $m_1>0$ vertices of $\mathcal{X}$, and $\sigma_{2} \sim \text{Poisson}(2r_0 - 1)$. The pg.f. for the trivial case $1 \leq k \leq 2$ then follows straightforwardly. When $k=3$, we have $S(\Pi_{2})=\{0,1,\dots,m_1\}$, then Eq. \eqref{e:3hopsprop} implies, via the generating function Eq. \eqref{e:fmain}, that
  \begin{equation}
 \mathbb{E}[q^{\sigma_{3}}] = \frac{1}{{m_1 + m_2 \choose m_1}} [u^{m_2}]\left( \frac{1}{\left(1-u\right)\left(1-u q\right) \cdots \left(1-u q^{m_1}\right)}\right)
  \end{equation}
  gives the p.g.f. of the number of integer partitions into $m_2$ parts selected from the set $S(\Pi_{2})$.

Using the series expansion 
\begin{equation} 
  \nonumber
  \frac{1}{(1-q)^{1+l}} = \sum_{n=0}^\infty {n+l \choose n} q^n,
 \qquad l \geq 0,  
\end{equation} 
we find that the generating function of Eq. \eqref{e:final} may be written
\begin{multline}\label{e:l1}
 \sum_{\Pi \in \mathcal{P}\left(\mathcal{M}_{k-2}\right)}  \prod_{t=0}^{m_{k-2}}\frac{1}{( 1 - u q^{S_{t}(\Pi)} )^{1+\sum_{l=1}^{k-3}\pi_{k-2,l}^\star (t)} }
 \\=   \sum_{\Pi \in \mathcal{P}\left(\mathcal{M}_{k-2}\right)} \prod_{t=0}^{m_{k-2}} \sum_{n=0}^\infty {n+\sum_{l=1}^{k-3}\pi_{k-2,l}^\star (t) \choose n} u^{n} q^{n S_t(\Pi )}
\end{multline}
Due to the technique used when constructing a basic, bivariate generating function for restricted integer partitions, we straightforwardly have that the r.h.s. of Eq. \eqref{e:l1} may be written
\begin{multline}
 \sum_{\Pi \in \mathcal{P}\left(\mathcal{M}_{k-2}\right)} \prod_{t=0}^{m_{k-2}} \sum_{n=0}^\infty {n+\sum_{l=1}^{k-3}\pi_{k-2,l}^\star (t) \choose n} u^{n} q^{n S_t(\Pi )}   \\ = \sum_{m_{k-1}=0}^\infty  \sum_{\nu=0}^\infty \left[ \sum_{\Pi \in \mathcal{P}( [ m_{k-2} \times m_{k-1} ]) \atop S(\Pi_{k-2}) \cdot \pi^\star(\Pi) = \nu  }\prod_{t=0}^{m_{k-2}}   {\pi^\star (t) +\sum_{l=1}^{k-3}\pi_{k-2,l}^\star (t) \choose \pi^\star (t)} \right] q^{\nu}u^{m_{k-1}},  
\end{multline}
and so the univariate generating function of Eq. \eqref{e:final} may be written as the following power series,
\begin{multline}
 \frac{ 1}{{m_1 + \cdots +  m_{k-1} \choose m_1,\ldots, m_{k-1} }}
         [u^{m_{k-1}}] \sum_{\Pi \in \mathcal{P}\left(\mathcal{M}_{k-2}\right)}\prod_{t=0}^{m_{k-2}}\frac{1}{( 1 - u q^{S_{t}(\Pi)} )^{1+\sum_{l=1}^{k-3}\pi_{k-2,l}^\star (t)} }
\\= \frac{ 1}{{m_1 + \cdots +  m_{k-1} \choose m_1,\ldots, m_{k-1} }}
        \sum_{n=0}^\infty \left[ \sum_{\Pi \in \mathcal{P}\left(\mathcal{M}_{k-2}\right)}  \sum_{\pi \in {\cal P}( [ m_{k-2} \times m_{k-1} ] )  \atop  S(\Pi) \cdot \pi^\star = n  }\prod_{t=0}^{m_{k-2}}{\pi^\star (t) +\sum_{l=1}^{k-3}\pi_{k-2,l}^\star (t) \choose \pi^\star (t)} \right]q^n, 
\end{multline} 
which provides the conclusion.
\end{proof}

\subsubsection*{Acknowledgements}

We thank Marc Barthelemy, Ginestra Bianconi, Carl P. Dettmann, Orestis Georgiou, Suhanya Jayaprakasam, Jon Keating, Sunwoo Kim, Georgie Knight and Dusit Niyato for many helpful discussions. This research is supported by the Ministry of Education, Singapore, under its AcRF Tier 1 grant MOE2018-T1-001-201 RG25/18. The first author was also partially supported by the EPSRC grant \textit{Random Walks on Random Networks}, Institutional Sponsorship 2015, and thanks J\"{u}rgen Jost for kind hospitality at the Max Plank Institute for Mathematics in the Sciences, Leipzig, during part of this study in 2018. We would also like to thank Szabolcs Horv\'{a}t for his development of the IGraphM package in Mathematica \cite{horvatcode}, used throughout this research.


\begin{thebibliography}{10}
\providecommand{\url}[1]{#1}
\csname url@samestyle\endcsname
\providecommand{\newblock}{\relax}
\providecommand{\bibinfo}[2]{#2}
\providecommand{\BIBentrySTDinterwordspacing}{\spaceskip=0pt\relax}
\providecommand{\BIBentryALTinterwordstretchfactor}{4}
\providecommand{\BIBentryALTinterwordspacing}{\spaceskip=\fontdimen2\font plus
\BIBentryALTinterwordstretchfactor\fontdimen3\font minus
  \fontdimen4\font\relax}
\providecommand{\BIBforeignlanguage}[2]{{%
\expandafter\ifx\csname l@#1\endcsname\relax
\typeout{** WARNING: IEEEtran.bst: No hyphenation pattern has been}%
\typeout{** loaded for the language `#1'. Using the pattern for}%
\typeout{** the default language instead.}%
\else
\language=\csname l@#1\endcsname
\fi
#2}}
\providecommand{\BIBdecl}{\relax}
\BIBdecl

\bibitem{barthelemy2018}
\BIBentryALTinterwordspacing
M.~Barthelemy, \emph{Morphogenesis of Spatial Networks}.\hskip 1em plus 0.5em
  minus 0.4em\relax Springer International Publishing, 2018.
\BIBentrySTDinterwordspacing

\bibitem{kartungiles2019}
A.~P. Kartun-Giles, M.~Barthelemy, and C.~P. Dettmann, ``Shape of shortest
  paths in random spatial networks,'' \emph{Phys. Rev. E}, vol. 100, p. 032315,
  2019.

\bibitem{georgiou2013}
O.~Georgiou, C.~Dettmann, and J.~Coon, ``Network connectivity through small
  openings,'' \emph{Proc. ISWCS '13, Ilmenau, Germany}, pp. 1--5, August 2013.

\bibitem{georgiou2015}
\BIBentryALTinterwordspacing
O.~Georgiou, M.~Z. Bocus, M.~R. Rahman, C.~P. Dettmann, and J.~P. Coon,
  ``Network connectivity in non-convex domains with reflections,'' \emph{{IEEE}
  Communications Letters}, vol.~19, no.~3, pp. 427--430, Mar. 2015.
\BIBentrySTDinterwordspacing

\bibitem{georgiou2016}
\BIBentryALTinterwordspacing
O.~Georgiou, ``Algebraic connectivity of keyhole random geometric graphs,''
  \emph{{IEEE} Communications Letters}, vol.~20, no.~10, pp. 2079--2082, Oct.
  2016. 
\BIBentrySTDinterwordspacing

\bibitem{kartungiles2016}
A.~P. Giles, O.~Georgiou, and C.~P. Dettmann, ``Connectivity of soft random
  geometric graphs over annuli,'' \emph{J. Stat. Phys.}, vol. 162, no.~4, p.
  1068–1083, January 2016.

\bibitem{mulder2018}
\BIBentryALTinterwordspacing
D.~Mulder and G.~Bianconi, ``Network geometry and complexity,'' \emph{Journal
  of Statistical Physics}, vol. 173, no. 3-4, pp. 783--805, Jul. 2018.
\BIBentrySTDinterwordspacing

\bibitem{cunningham2017}
W.~Cunningham, K.~Zuev, and D.~Krioukov, ``Navigability of random geometric
  graphs in the universe and other spacetimes,'' \emph{Scientific Reports},
  vol.~7, no.~1, Aug. 2017.

\bibitem{fountoulakis2020}
\BIBentryALTinterwordspacing
N.~Fountoulakis and J.~Yukich, ``Limit theory for isolated and extreme points
  in hyperbolic random geometric graphs,'' \emph{Electronic Journal of
  Probability}, vol.~25, no.~0, 2020.
\BIBentrySTDinterwordspacing

\bibitem{wilsher2020}
M.~Wilsher, C.~P. Dettmann, and A.~Ganesh, ``Connectivity in one-dimensional
  soft random geometric graphs,'' 2020.

\bibitem{kartungiles2020}
A.~P. Kartun-Giles, K.~Koufos, X.~Lu, N.~Privault, and D.~Niyato, ``Two-hop
  connectivity to the roadside in a vanet under the random connection model,''
  2020.

\bibitem{penrosebook}
\BIBentryALTinterwordspacing
M.~Penrose, \emph{Random Geometric Graphs}.\hskip 1em plus 0.5em minus
  0.4em\relax Oxford University Presss, May 2003.
\BIBentrySTDinterwordspacing

\bibitem{drory1997}
\BIBentryALTinterwordspacing
A.~Drory, ``Exact solution of a one-dimensional continuum percolation model,''
  \emph{Physical Review E}, vol.~55, no.~4, pp. 3878--3885, Apr. 1997.
\BIBentrySTDinterwordspacing

\bibitem{knight2017}
G.~{Knight}, A.~P. {Kartun-Giles}, O.~{Georgiou}, and C.~P. {Dettmann},
  ``Counting geodesic paths in 1-d vanets,'' \emph{IEEE Wireless Communications
  Letters}, vol.~6, no.~1, pp. 110--113, Feb 2017.

\bibitem{grimmettbook}
G.~Grimmett, \emph{Probability on Graphs}.\hskip 1em plus 0.5em minus
  0.4em\relax Cambridge University Press, 2010.

\bibitem{boguna2020}
M.~Boguna, I.~Bonamassa, M.~D. Domenico, S.~Havlin, D.~Krioukov, and M.~A.
  Serrano, ``Network geometry,'' 2020.

\bibitem{lieb1967}
\BIBentryALTinterwordspacing
E.~H. Lieb, D.~C. Mattis, and F.~J. Dyson, ``Mathematical physics in one
  dimension: Exactly soluble models of interacting particles,'' \emph{Physics
  Today}, vol.~20, no.~9, pp. 81--82, Sep. 1967.
\BIBentrySTDinterwordspacing

\bibitem{liebreview}
\BIBentryALTinterwordspacing
------, ``Mathematical physics in one dimension: Exactly soluble models of
  interacting particles,'' \emph{Physics Today}, vol.~20, no.~9, pp. 81--82,
  Sep. 1967.
\BIBentrySTDinterwordspacing

\bibitem{tesoro2016}
S.~Tesoro, I.~Ali, A.~N. Morozov, N.~Sulaiman, and D.~Marenduzzo, ``A
  one-dimensional statistical mechanics model for nucleosome positioning on
  genomic {DNA},'' \emph{Physical Biology}, vol.~13, no.~1, p. 016004, feb
  2016.

\bibitem{pesheva1997}
N.~C. Pesheva and D.~P. Daneva and J.~G. Brankov, ``Self-organized criticality in 1D stochastic traffic flow model'' \emph{Reports on Mathematical Physics}, vol.~40, no.~3,
  1997.

\bibitem{krioukov2016}
D.~Krioukov, ``Clustering implies geometry in networks,'' \emph{Phys. Rev.
  Lett.}, vol. 116, p. 208302, May 2016.

\bibitem{meester1996}
\BIBentryALTinterwordspacing
R.~Meester and R.~Roy, \emph{Continuum Percolation}.\hskip 1em plus 0.5em minus
  0.4em\relax Cambridge University Press, 1996.
\BIBentrySTDinterwordspacing

\bibitem{shalitin1981}
\BIBentryALTinterwordspacing
D.~Shalitin, ``Continuous percolation in one dimension,'' \emph{Journal of
  Physics A: Mathematical and General}, vol.~14, no.~8, pp. 1983--1992, aug
  1981.
\BIBentrySTDinterwordspacing

\bibitem{gori2017}
\BIBentryALTinterwordspacing
G.~Gori, M.~Michelangeli, N.~Defenu, and A.~Trombettoni, ``One-dimensional
  long-range percolation: A numerical study,'' \emph{Physical Review E},
  vol.~96, no.~1, Jul. 2017.
\BIBentrySTDinterwordspacing

\bibitem{foh2004}
\BIBentryALTinterwordspacing
C.~H. Foh and B.~S. Lee, ``A closed form network connectivity formula
  one-dimensional {MANETs},'' in \emph{2004 {IEEE} International Conference on
  Communications ({IEEE} Cat. No.04CH37577)}.\hskip 1em plus 0.5em minus
  0.4em\relax {IEEE}, 2004.
\BIBentrySTDinterwordspacing

\bibitem{foh2005}
\BIBentryALTinterwordspacing
C.~H. Foh, G.~Liu, B.~S. Lee, B.-C. Seet, K.-J. Wong, and C.~P. Fu, ``Network
  connectivity of one-dimensional {MANETs} with random waypoint movement,''
  \emph{{IEEE} Communications Letters}, vol.~9, no.~1, pp. 31--33, 2005.
\BIBentrySTDinterwordspacing

\bibitem{han2007}
\BIBentryALTinterwordspacing
G.~Han and A.~Makowski, ``A very strong zero-one law for connectivity in
  one-dimensional geometric random graphs,'' \emph{{IEEE} Communications
  Letters}, vol.~11, no.~1, pp. 55--57, Jan. 2007.
\BIBentrySTDinterwordspacing

\bibitem{shang2009}
\BIBentryALTinterwordspacing
Y.~Shang, ``Connectivity in a random interval graph with access points,''
  \emph{Information Processing Letters}, vol. 109, no.~9, pp. 446--449, Apr.
  2009.
\BIBentrySTDinterwordspacing

\bibitem{ng2011}
S.~C. Ng, W.~Zhang, Y.~Zhang, Y.~Yang, and Guoqiang, ``Analysis of access and
  connectivity probabilities in vehicular relay networks,'' \emph{{IEEE}
  Journal on Selected Areas in Communications}, vol.~29, no.~1, pp. 140--150,
  Jan. 2011.

\bibitem{zhang2012}
W.~Zhang, Y.~Chen, Y.~Yang, X.~Wang, Y.~Zhang, X.~Hong, and G.~Mao, ``Multi-hop
  connectivity probability in infrastructure-based vehicular networks,''
  \emph{{IEEE} Journal on Selected Areas in Communications}, vol.~30, no.~4,
  pp. 740--747, 2012.

\bibitem{zhang2014}
\BIBentryALTinterwordspacing
Z.~Zhang, G.~Mao, and B.~D.~O. Anderson, ``Stochastic characterization of
  information propagation process in vehicular ad hoc networks,'' \emph{{IEEE}
  Transactions on Intelligent Transportation Systems}, vol.~15, no.~1, pp.
  122--135, Feb. 2014.
\BIBentrySTDinterwordspacing

\bibitem{mao2017}
G.~Mao, \emph{Connectivity of Communication Networks}.\hskip 1em plus 0.5em
  minus 0.4em\relax Springer International Publishing, 2017.

\bibitem{ajeer2011}
\BIBentryALTinterwordspacing
V.~M. Ajeer, P.~Neelakantan, and A.~Babu, ``Network connectivity of
  one-dimensional vehicular ad hoc network,'' in \emph{2011 International
  Conference on Communications and Signal Processing}.\hskip 1em plus 0.5em
  minus 0.4em\relax {IEEE}, Feb. 2011.
\BIBentrySTDinterwordspacing

\bibitem{gupta1999}
\BIBentryALTinterwordspacing
P.~Gupta and P.~R. Kumar, ``Critical power for asymptotic connectivity in
  wireless networks,'' in \emph{Stochastic Analysis, Control, Optimization and
  Applications}.\hskip 1em plus 0.5em minus 0.4em\relax Birkh\"{a}user Boston,
  1999, pp. 547--566.
\BIBentrySTDinterwordspacing

\bibitem{koufos2016}
K.~{Koufos} and C.~P. {Dettmann}, ``Temporal correlation of interference in
  bounded mobile ad hoc networks with blockage,'' \emph{IEEE Communications
  Letters}, vol.~20, no.~12, pp. 2494--2497, Dec 2016.

\bibitem{koufos2018}
------, ``Temporal correlation of interference and outage in mobile networks
  over one-dimensional finite regions,'' \emph{IEEE Transactions on Mobile
  Computing}, vol.~17, no.~2, pp. 475--487, Feb 2018.

\bibitem{koufos2019}
K.~Koufos and C.~P. Dettmann, ``The meta distribution of the SIR in linear
  motorway vanets,'' \emph{IEEE Trans. Comms. }, vol.~67, no.~12, pp. 8696-8706, Sep 2019.

\bibitem{kartungiles20182}
A.~{Kartun-Giles}, S.~{Jayaprakasam}, and S.~{Kim}, ``Euclidean matchings in
  ultra-dense networks,'' \emph{IEEE Communications Letters}, vol.~22, no.~6,
  pp. 1216--1219, June 2018.

\bibitem{kartungiles20202}
A.~P. Kartun-Giles, K.~Koufos, and S.~Kim, ``Meta distribution of SIR in
  ultra-dense networks with bipartite euclidean matchings,'' arXiv:1910.13216, 2020.

\bibitem{gupta2008}
\BIBentryALTinterwordspacing
B.~Gupta, S.~K. Iyer, and D.~Manjunath, ``Topological properties of the one
  dimensional exponential random geometric graph,'' \emph{Random Structures and
  Algorithms}, vol.~32, no.~2, pp. 181--204, 2008.
\BIBentrySTDinterwordspacing

\bibitem{godehardt1996}
\BIBentryALTinterwordspacing
E.~Godehardt and J.~Jaworski, ``On the connectivity of a random interval
  graph,'' \emph{Random Structures and Algorithms}, vol.~9, no. 1-2, pp.
  137--161, Aug. 1996. 
\BIBentrySTDinterwordspacing

\bibitem{domb1989}
\BIBentryALTinterwordspacing
C.~Domb, ``Covering by random intervals and one-dimensional continuum
  percolation,'' \emph{Journal of Statistical Physics}, vol.~55, no. 1-2, pp.
  441--460, Apr. 1989.
\BIBentrySTDinterwordspacing

\bibitem{krivelevich2016}
\BIBentryALTinterwordspacing
M.~Krivelevich, K.~Panagiotou, M.~Penrose, and C.~McDiarmid, \emph{Random
  Graphs, Geometry and Asymptotic Structure}, N.~Fountoulakis and D.~Hefetz,
  Eds.\hskip 1em plus 0.5em minus 0.4em\relax Cambridge University Press, 2016.
\BIBentrySTDinterwordspacing

\bibitem{kartungiles2015}
\BIBentryALTinterwordspacing
A.~P. Giles, O.~Georgiou, and C.~P. Dettmann, ``Betweenness centrality in dense
  random geometric networks,'' in \emph{2015 {IEEE} International Conference on
  Communications ({ICC})}.\hskip 1em plus 0.5em minus 0.4em\relax {IEEE}, Jun.
  2015.
\BIBentrySTDinterwordspacing

\bibitem{kartungiles2018}
A.~P. Kartun-Giles and S.~Kim, ``Counting k-hop paths in the random connection
  model,'' \emph{{IEEE} Transactions on Wireless Communications}, vol.~17,
  no.~5, pp. 3201--3210, May 2018.

\bibitem{kartungiles20183}
A.~Kartun-Giles, D.~Krioukov, J.~Gleeson, Y.~Moreno, and G.~Bianconi, ``Sparse
  power-law network model for reliable statistical predictions based on sampled
  data,'' \emph{Entropy}, vol.~20, no.~4, p. 257, 2018.

\bibitem{kartungiles20162}
A.~P. Kartun-Giles, ``Connectivity and centrality in dense random geometric
  graphs,'' 2016.

\bibitem{privault2019}
\BIBentryALTinterwordspacing
N.~Privault, ``Moments of k-hop counts in the random-connection model,''
  \emph{Journal of Applied Probability}, vol.~56, no.~4, pp. 1106--1121, Dec.
  2019.
\BIBentrySTDinterwordspacing

\bibitem{janson2012}
\BIBentryALTinterwordspacing
S.~Janson, ``{Generalized Galois Numbers, Inversions, Lattice Paths, Ferrers
  Diagrams and Limit Theorems},'' \emph{The Electronic Journal of
  Combinatorics}, vol.~19, no.~3, Sep. 2012.
\BIBentrySTDinterwordspacing

\bibitem{horvatcode}
\BIBentryALTinterwordspacing
S. Horv\'{a}t, IGraph/M—the igraph interface for Mathematica, \url{http://szhorvat.net/mathematica/IGraphM}, doi:10.5281/zenodo.1134932
\BIBentrySTDinterwordspacing

\end{thebibliography}


\begin{appendices}
\section{Appendix A: Code for Monte Carlo corroboration}\label{a:1}
Here we contain the code used to corroborate Eq.~\eqref{e:eq1}, for the case $k=3$ and $k=4$. The output of this code is the two graphs in Figs. \ref{fig:k81} and \ref{fig:k82}
\begin{lstlisting}[language=Mathematica,caption={Example code}]
makeedges = 
 Function[{subsets, r0}, 
  Select[subsets, 
   Abs[#[[1]] - #[[2]]] < r0 &]];(*Make the edges of the graph*)
graph[vert_, r0_] := 
 Graph[vert, 
  UndirectedEdge @@@ 
   makeedges[Subsets[vert, {2}], 
    r0]];(*Make the 1d random geometric graph*)
makegraph[nv_, coord_, width_, height_, r0_] := 
 Module[{pts, newvertices, newedges, edges, alledges, allpts, e1, e2, 
   ew}, allpts = Table[RandomReal[{-width/2, width/2}], {i, 1, nv}];
  allpts = Join[allpts, coord];
  pdg1 = graph[allpts, r0]];
Clear[r0];
domainwidth = 1;
mk3hops[n_, r0_, k_] := Module[{gr, pthcount, m1, m2},
   m1 = 7;
   m2 = 7;
   pt1 = RandomReal[{0.5 - 2 r0, r0 - 0.5}, m1];
   pt2 = RandomReal[{0.5 - r0, 2 r0 - 0.5}, m2];
   ptall = Join[pt1, pt2, {-0.5, 0.5}];
   gr = makegraph[0, ptall, domainwidth, 0.002, r0];
   pthcount = Length[FindPath[gr, -0.5, 0.5, {k}, All]];
   pthcount];
mk4hops[n_, r0_, k_] := Module[{gr, pthcount, m1, m2, m3},
   m1 = 5;
   m2 = 5;
   m3 = 5;
   pt1 = RandomReal[{0.5 - 3 r0, r0 - 0.5}, m1];
   pt2 = RandomReal[{0.5 - 2 r0, 2 r0 - 0.5}, m2];
   pt3 = RandomReal[{0.5 - r0, 3 r0 - 0.5}, m3];
   ptall = Join[pt1, pt2, pt3, {-0.5, 0.5}];
   gr = makegraph[0, ptall, domainwidth, 0.002, r0];
   pthcount = Length[FindPath[gr, -0.5, 0.5, {k}, All]];
   pthcount];
data1 = Table[mk3hops[0, 0.35, 3], {i, 1, 50000}];
d3 = HistogramList[data1, {1}, "Probability"][[2, All]];
d4 = Transpose[{Range[0, Length@d3 - 1], d3}];
ser1 = Series[QBinomial[14, 7, q], {q, 0, 49}];
cf1 = CoefficientList[ser1, q];
d1 = Transpose[{Range[0, Length@cf1 - 1], 
    N@(1/Binomial[14, 7]) cf1}];
ListPlot[{d1, d4}, 
 Joined -> {True, 
   False}](**Plot a graph of the coefficients against the Monte Carlo \
simulation for k=3**)

data2 = Table[mk4hops[0, 0.27, 4], {i, 1, 50000}];
bincnts = (Join[{0}, PositionIndex[#][0], {Length[#] + 1}] // 
    Rest[#] - 1 - 
      Most[#] &) &;(*Count balls in bins,to get the part counts of \
the complementary partition*)
nt[x_] := Module[{a}, If[x == 1, a = 0;]; If[x == 0, a = 1;]; a]
tf[x_] := Module[{a, li}, a = bincnts@x;
  li = {};
  For[i = 1, i <= Length@a, i++, 
   For[j = 1, j <= a[[i]], j++, li = Append[li, i - 1];];];
  {Prepend[Accumulate@li, 0], bincnts[nt[#] & /@ x]}]
g[x_] := Product[
  1/(1 - u q^tf[x][[1, t + 1]])^(1 + tf[x][[2, t + 1]]), {t, 0, 5}]
d7 = Transpose[{Table[i, {i, 0, 125}], 
    1/Multinomial[5, 5, 5] CoefficientList[
      CoefficientList[
        Series[Total[
          g[#] & /@ Permutations[{0, 0, 0, 0, 0, 1, 1, 1, 1, 1}]], {u,
           0, 5}], u][[6]], q]}];
d5 = HistogramList[data2, {1}, "Probability"][[2, All]];
d6 = Transpose[{Range[0, Length@d5 - 1], d5}];
ListPlot[{d7, d6}, 
 Joined -> {True, 
   False}](**Plot a graph of the coefficients against the Monte Carlo \
simulation for k=4**)
\end{lstlisting}
\end{appendices}
\end{document}